\documentclass{amsart}

\usepackage{amsfonts}
\usepackage{amssymb}
\usepackage{amsthm}
\usepackage{eucal}
\usepackage{tikz}

\theoremstyle{plain}
\newtheorem{theorem}{Theorem}[section]

\newtheorem{lemma}[theorem]{Lemma}
\newtheorem{corollary}[theorem]{Corollary}

\theoremstyle{definition}
\newtheorem*{definition}{Definition}

\theoremstyle{remark}
\newtheorem*{remark}{Remark}

\newcommand{\p}{\mathcal{P}}
\newcommand{\gen}[1]{\langle #1\rangle}
\newcommand{\Z}{\mathbb{Z}}

\newcommand{\Aut}{\mathrm{Aut}}
\newcommand{\normal}{\trianglelefteq}

\begin{document}
\title[Power graphs]{On power graphs of finite groups with forbidden induced subgraphs}
\author{A. Doostabadi, A. Erfanian and M. Farrokhi D. G.}
\keywords{Power graph, claw, star, cycle, forbidden subgraph}
\subjclass[2000]{Primary 05C25; Secondary 05C99.}
\address{Department of Pure Mathematics, Ferdowsi University of Mashhad, Mashhad, Iran}
\email{a.doostabadi@yahoo.com}
\address{Department of Pure Mathematics, Ferdowsi University of Mashhad, Mashhad, Iran}
\email{erfanian@math.um.ac.ir}
\address{Department of Pure Mathematics, Ferdowsi University of Mashhad, Mashhad, Iran}
\email{m.farrokhi.d.g@gmail.com}
\date{}
\begin{abstract}
The power graph $\p(G)$ of a finite group $G$ is a graph whose vertex set is the group $G$ and distinct elements $x,y\in G$ are adjacent if one is a power of the other, that is, $x$ and $y$ are adjacent if $x\in\gen{y}$ or $y\in\gen{x}$. We characterize all finite groups $G$ whose power graphs are claw-free, $K_{1,4}$-free or $C_4$-free. 
\end{abstract} 
\maketitle
%==================================================
\section{Introduction}
The \textit{power graph} $\p(G)$ of a group $G$ is a graph with elements of $G$ as its vertices such that two distinct elements $x$ and $y$ are adjacent if $y=x^m$ or $x=y^m$ for some positive integer $m$. Clearly, for finite (torsion) groups two distinct elements $x$ and $y$ are adjacent if and only if $x\in\gen{y}$ or $y\in\gen{x}$.

Power graphs of groups were brought up by Kelarev and Quinn \cite{avk-sjq:2000,avk-sjq:2002}. Subsequently Chakrabarty, Ghosh and Sen \cite{ic-sg-mks:2009} studied power graphs that are complete or Eulerian or Hamiltonian. Recently Cameron \cite{pjc:2010} has shown that two finite groups with isomorphic power graphs have the same number of elements of equal order. As to a converse statement, two finite abelian groups with isomorphic power graphs are isomorphic, see Cameron and Gosh \cite{pjc-sg:2011}.

A graph is said to be \textit{$\Gamma$-free} for some graph $\Gamma$ if it has no induced subgraphs isomorphic to $\Gamma$. Graphs with forbidden structures appear in many contexts like extermal graph theory where lower and upper bounds can be obtained for various numberical invariants of the corresponding graphs.

In this paper, we shall study power graphs of finite groups with forbidden subgraphs, that is, $\Gamma$-free graphs for some graphs $\Gamma$. Indeed, in Theorems 2.2 and 2.6, we will classify all finite groups whose power graphs have no induced subgraphs isomorphic to $K_{1,3}$ or $K_{1,4}$, respectively. Also, in Theorems 3.5--3.8, we give some structural results for finite groups whose power graphs have no induced subgraphs isomorphic to $C_4$, the cycle of length four. We remark that a group with $K_{1,1}$-free power graph is just the trivial group, a $K_{1,2}$-free power graph must be complete so that the correspoding group is a cyclic $p$-group, and a group with a triangle-free power graph is an elementary abelian $2$-group.

In what follows, $S_p(G)$, $\exp(G)$, $\pi(G)$ and $\omega(G)$ stand for a Sylow $p$-subgroup of $G$, the exponent of $G$, the set of all prime divisors of $|G|$ and the set of all orders of elements of $G$, respectively. Also, if $x$ and $y$ are vertices of a graph, then $x\sim y$ indicates that $x$ and $y$ are adjacent. A subset $S$ of vertices of a graph $\Gamma$ is said to be \textit{independent} if the subgraph induced by $S$ is an empty graph (graph with no edges). The maximum size of independent sets of $\Gamma$ is called the \textit{independence number} of $\Gamma$ and it is denoted by $\alpha(\Gamma)$. 
%==================================================
\section{Claw-free and $K_{1,4}$-free power graphs}
In this section, the structure of finite groups whose power graphs are claw-free ($K_{1,3}$-free) or $K_{1,4}$-free will be studied. Keep in mind that the $p$-rank $m_p(G)$ of a finite group $G$ is the maximum rank of an elementary abelian $p$-subgroup of $G$, where $p$ is a prime. Suppose for the moment that the $p$-rank of the finite $p$-group $G$ equals $1$.Then it is known that $G$ is a cyclic $p$-group or a generalized quaternion $2$-group, see \cite[5.3.6]{djsr:1996}.

We begin with the following simple lemma, which gives a necessary and sufficient condition for a finite $p$-group to admit a claw-free power graph.
%--------------------------------------------------
\begin{lemma}\label{clawfreepgroups}
If $G$ is a finite $p$-group, then $\p(G)$ is claw-free if and only if $G$ is cyclic.
\end{lemma}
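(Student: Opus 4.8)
The plan is to treat the two implications separately. The easy direction is $(\Leftarrow)$: if $G$ is a cyclic $p$-group, then its subgroups form a chain under inclusion, so for any $x,y\in G$ one of $\gen x,\gen y$ contains the other and hence $x\sim y$. Thus $\p(G)$ is complete, and in particular claw-free (a claw has two non-adjacent vertices).

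For $(\Rightarrow)$ I would argue by contraposition: assuming $G$ is a non-cyclic finite $p$-group, I will exhibit an induced $K_{1,3}$ in $\p(G)$. The crucial observation is that the identity $e$ is adjacent to every other vertex of $\p(G)$, so it suffices to find three pairwise non-adjacent non-identity elements $x,y,z$; then $\{e,x,y,z\}$ induces a claw. Also, if $u$ and $v$ have the same order, then $u\sim v$ forces $\gen u=\gen v$, so producing such a triple amounts to choosing generators of three distinct cyclic subgroups of a common order. I then invoke the classification of finite $p$-groups of $p$-rank $1$ (\cite[5.3.6]{djsr:1996}): since $G$ is non-cyclic, either $m_p(G)\ge 2$, or $p=2$ and $G$ is generalized quaternion. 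In the first case $G$ contains a subgroup $E\cong\Z_p\times\Z_p$, and $E$ has $p+1\ge 3$ subgroups of order $p$, whose generators give the desired $x,y,z$. In the second case $G\cong Q_{2^n}$ with $n\ge 3$ contains a copy of $Q_8$, and the three order-$4$ subgroups $\gen i,\gen j,\gen{ij}$ of $Q_8$ are distinct, so $i,j,ij$ are pairwise non-adjacent; since $\p(Q_8)$ is an induced subgraph of $\p(G)$, we again get a claw. Either way $\p(G)$ is not claw-free, completing the contrapositive.

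The only steps needing care are routine: the count of order-$p$ subgroups of $\Z_p\times\Z_p$, the inclusion $Q_8\le Q_{2^n}$ for $n\ge 3$ (immediate from the standard presentation of $Q_{2^n}$), and the fact that the power graph of a subgroup is an induced subgraph of the ambient power graph. I do not foresee a real obstacle; the one point to keep in mind is that the generalized quaternion case must be separated from the rank argument, because such a group has a unique subgroup of order $2$ and one is forced to use its cyclic subgroups of order $4$ instead.
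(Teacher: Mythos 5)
Your proposal is correct and follows essentially the same route as the paper: both reduce to the $p$-rank dichotomy via \cite[5.3.6]{djsr:1996}, produce a claw from three distinct order-$p$ subgroups of an elementary abelian subgroup when $m_p(G)\ge 2$, and handle the generalized quaternion case separately with three distinct cyclic subgroups of order $4$. The only cosmetic differences are that you argue by contraposition rather than contradiction and pass to a $Q_8$ subgroup instead of working with $a,b,ab$ in $Q_{2^n}$ directly; your explicit justification that equal-order non-adjacent elements must generate distinct subgroups is a welcome detail the paper leaves implicit.
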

\begin{proof}
Let $r=m_p(G)$. If $r>1$ then $G$ has an elementary abelian $p$-subgroup $H=\gen{a_1,a_2,\ldots, a_r}$ of order $p^r$. Hence, the set $\{1,a_1,a_2,a_1a_2\}$ induces a claw, which is a contradiction. Thus $r=1$ and either $G$ is a cyclic $p$-group or $G$ is a generalized quaternion $2$-group. If $G=\gen{a,b:a^{2^{n-1}}=1, a^{2^{n-2}}=b^2, b^{-1}ab=b}$ is a generalized quaternion $2$-group, then again the vertices $1,a,b,ab$ induce a claw, which is a contradiction. Hence $G$ must be a cyclic $p$-group. The converse follows from the fact that cyclic $p$-groups have complete power graphs.
\end{proof}

Utilizing the above lemma we have the following characterization of claw-free power graphs.
%--------------------------------------------------
\begin{theorem}\label{clawfreegroups}
If $G$ is a finite group, then $\p(G)$ is claw-free if and only if $G$ is a cyclic group of order $p^mq^n$, where $\{m,n\}\cap\{0,1\}\neq\emptyset$.
\end{theorem}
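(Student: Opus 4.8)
The plan is to establish both directions by locating, or ruling out, induced copies of $K_{1,3}$, using two elementary observations throughout. First, for any subgroup $H\le G$ the power graph $\p(H)$ coincides with the subgraph of $\p(G)$ induced on $H$, so claw-freeness passes to subgroups. Second, since $1$ is adjacent to every other vertex, any three pairwise non-adjacent vertices of $\p(G)$ must be non-identity, and together with $1$ they span a claw; conversely the three leaves of any claw are pairwise non-adjacent non-identity vertices. Hence $\p(G)$ is claw-free if and only if it contains no three pairwise non-adjacent non-identity vertices.

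For the forward direction, assume $\p(G)$ is claw-free. Applying Lemma~\ref{clawfreepgroups} to the Sylow subgroups (which induce subgraphs of $\p(G)$) shows that every Sylow subgroup of $G$ is cyclic. I claim $G$ is nilpotent: if not, some Sylow $p$-subgroup is not normal, so $n_p\equiv1\pmod p$ together with $n_p>1$ forces $n_p\ge p+1\ge3$; choosing generators $g_1,g_2,g_3$ of three distinct (necessarily cyclic) Sylow $p$-subgroups $P_1,P_2,P_3$, one has $g_i\notin P_j$ for $i\ne j$ (otherwise $P_i=\gen{g_i}\le P_j$, hence $P_i=P_j$ by order), so the $g_i$ are pairwise non-adjacent and $\{1,g_1,g_2,g_3\}$ induces a claw. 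A nilpotent group with cyclic Sylow subgroups is the direct product of them, hence cyclic. Now, in a cyclic group $x\sim y$ exactly when $|x|$ and $|y|$ are comparable under divisibility. If $|G|$ had three distinct prime divisors $p,q,r$, elements of orders $p,q,r$ would be pairwise non-adjacent; and if $|G|=p^mq^n$ with $m,n\ge2$, then elements of orders $p^2,pq,q^2$ (which all divide $|G|$) would be pairwise non-adjacent. Both situations are forbidden, so $|G|=p^mq^n$ with $\{m,n\}\cap\{0,1\}\ne\emptyset$.

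For the converse, let $G$ be cyclic of order $p^mq^n$ with, say, $m\le1$ (the case $n\le1$ is symmetric, and if $G$ is a cyclic $p$-group its power graph is complete). Suppose $\p(G)$ had a claw; then its three leaves would be pairwise non-adjacent non-identity elements, and since distinct non-adjacent elements of a cyclic group generate distinct subgroups and hence have incomparable orders, we would obtain three pairwise incomparable divisors of $p^mq^n$. But every such divisor has the form $q^i$ or $pq^i$, so among any three of them two share the same power of $p$ and are therefore comparable --- a contradiction. Hence $\p(G)$ is claw-free.

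The one step deserving real care is the nilpotency claim in the forward direction: the mechanism is that non-nilpotency yields, via Sylow's congruence, at least three conjugate Sylow $p$-subgroups, and the hypothesis makes each of them cyclic, so their generators form an independent set of size three. Everything else is routine bookkeeping once the equivalence ``claw-free $\Leftrightarrow$ no three independent non-identity vertices'' and the description of adjacency in cyclic groups are in hand.
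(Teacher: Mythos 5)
Your proof is correct, and while it follows the same overall skeleton as the paper's (reduce to Sylow subgroups via Lemma~\ref{clawfreepgroups}, conclude $G$ is cyclic, constrain the order, verify the converse), the middle step is handled by genuinely different machinery. The paper invokes the classification of groups all of whose Sylow subgroups are cyclic (\cite[10.1.10]{djsr:1996}) to write $G=\gen{x}\rtimes\gen{y}$, and then extracts a claw from $\{1,x,y,y^g\}$ using one $p$-element together with two conjugate Sylow $q$-generators. You avoid that structure theorem entirely: the congruence $n_p\equiv 1\pmod p$ with $n_p>1$ gives at least three conjugates of a non-normal cyclic Sylow subgroup, whose generators are pairwise non-adjacent, so claw-freeness forces all Sylow subgroups normal, hence $G$ nilpotent and therefore cyclic. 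This is more elementary and self-contained, at the cost of being slightly less direct. Your converse is also more explicit than the paper's: where the paper simply asserts $\alpha(\p(\Z_{p^mq^n}))\le 2$, you prove it by the pigeonhole observation that among any three divisors of $p^mq^n$ with $m\le 1$, two share the same $p$-part and are therefore comparable, hence adjacent. Both your key reductions --- claw-free iff there is no independent triple of non-identity vertices (since $1$ is universal), and adjacency in a cyclic group being divisibility-comparability of orders --- are correct and used implicitly in the paper as well. No gaps.
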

\begin{proof}
If $|\pi(G)|\geq3$, then clearly we can construct a claw in $\p(G)$. Thus $|\pi(G)|\leq2$. If $G$ is a $p$-group, then by Lemma \ref{clawfreepgroups}, $G$ is cyclic and we are done. Suppose $G$ is not a $p$-group. Then $|G|=p^m q^n$ for some distinct primes $p$ and $q$. By Lemma \ref{clawfreepgroups}, the Sylow $p$-subgroups and Sylow $q$-subgroups of $G$ are cyclic, hence $G=\gen{x}\rtimes\gen{y}$ by \cite[10.1.10]{djsr:1996}, where $\gen{x}$ and $\gen{y}$ are the Sylow $p$-subgroup and a Sylow $q$-subgroup of $G$, respectively. If $G$ is not cyclic, then $\gen{y}\neq\gen{y^g}$ for some $g\in G$, from which it follows that $\{1,x,y,y^g\}$ induces a claw in $\p(G)$, a contradiction. Therefore $G\cong\Z_{p^mq^n}$ is a cyclic group. If $m,n\geq2$ then the elements of orders $1,p^2,pq,q^2$ induce a claw in $\p(G)$, which is a contradiction. Thus either $m=0,1$ or $n=0,1$, as required. Conversely, a simple verification shows that $\alpha(\p(\Z_{p^m q^n}))=1$ or $2$, which implies that $\p(\Z_{p^m q^n})$ is claw-free. The proof is complete.
\end{proof}

To deal with groups with $K_{1,4}$-free power graphs, we need the following two theorems.
%--------------------------------------------------
\begin{theorem}[\cite{mb-acb-am:1970,gs:1926}]\label{threecover}
Let $G$ be a finite group that is the union of three proper subgroups $H$, $K$ and $L$. Then $H\cap K=K\cap L=L\cap H$ and $G/H\cap K\cap L\cong \Z_2\times\Z_2$. In particular, $[G:H]=[G:K]=[G:L]=2$.
\end{theorem}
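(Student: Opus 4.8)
The plan is to run the classical ``irredundant covering'' argument. First I would record the elementary fact that no group is the union of two proper subgroups: if $G=A\cup B$ with $A,B$ proper, pick $a\in A\setminus B$ and $b\in B\setminus A$; then $ab$ lies in neither $A$ (else $b\in A$) nor $B$ (else $a\in B$), a contradiction. In particular the three subgroups $H,K,L$ are pairwise distinct and none of them is contained in the union of the other two (otherwise $G$ would be covered by two proper subgroups), so we may fix elements $a\in H\setminus(K\cup L)$, $b\in K\setminus(H\cup L)$, $c\in L\setminus(H\cup K)$; note that $a^{-1}$ again lies in $H\setminus(K\cup L)$, and similarly for $b,c$.

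The statement $H\cap K=K\cap L=L\cap H$ would follow from a ``nowhere to go'' observation. If some $x\in K\cap L$ were outside $H$, then $ax\notin H$ (else $x\in H$), while $ax\in K$ would force $a\in K$ and $ax\in L$ would force $a\in L$, both impossible; hence $K\cap L\subseteq H$, and symmetrically $H\cap L\subseteq K$ and $H\cap K\subseteq L$. Therefore $H\cap K=K\cap L=L\cap H=:D=H\cap K\cap L$.

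Next I would pin down the orders. The same product trick shows $a(K\setminus H)\subseteq L\setminus H$ and $a^{-1}(L\setminus H)\subseteq K\setminus H$; since $x\mapsto ax$ is injective, these inclusions give $|K\setminus H|=|L\setminus H|$, and because $K\cap H=L\cap H=D$ this yields $|K|=|L|$. Running the argument with $b$ in place of $a$ gives $|H|=|L|$, so $|H|=|K|=|L|=:m$. Since $K\cap L=D\subseteq H$, the set $G\setminus H$ is the disjoint union of $K\setminus H$ and $L\setminus H$, so $|G|=m+2(m-|D|)=3m-2|D|$. Lagrange's theorem now does the rest: $m\mid |G|$ forces $m\mid 2|D|$, while $|D|\mid m$; writing $m=t|D|$ we get $t\mid 2$, and $t=1$ is impossible since it would mean $H=D\subseteq K$, i.e. $G=K\cup L$. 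Hence $t=2$, that is $[H:D]=2$, and then $|G|=4|D|$, which gives $[G:H]=[G:K]=[G:L]=2$ and $[G:D]=4$.

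Finally, each of $H,K,L$ has index $2$ and is therefore normal, so $D$ is normal; the quotient $G/D$ has order $4$ and contains the three distinct subgroups $H/D,K/D,L/D$ of order $2$ (distinct because $H,K,L$ are), so it cannot be cyclic, whence $G/D\cong\Z_2\times\Z_2$. I expect the only delicate point to be the bookkeeping in the counting step, in particular making sure the cover is genuinely irredundant so that the elements $a,b,c$ exist and so that the case $t=1$ is excluded; everything else is a short manipulation with cosets and Lagrange's theorem.
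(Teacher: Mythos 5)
Your proof is correct and complete: the irredundancy argument, the ``nowhere to go'' step showing $K\cap L\subseteq H$ (and its symmetric versions), the counting $|G|=3m-2|D|$ combined with Lagrange to force $[H:D]=2$, and the final identification of $G/D$ as the noncyclic group of order $4$ all check out. Note that the paper itself gives no proof of this statement --- it is quoted from Scorza and from Bruckheimer--Bryan--Muir --- so there is nothing to compare against; your argument is essentially the classical one from those sources and would serve as a valid self-contained proof.
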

%--------------------------------------------------
\begin{theorem}[{\cite[5.3.4]{djsr:1996}}] \label{maximalcycle}
A group of order $p^n$ has a cyclic maximal subgroup if and only if it is of one of the following types:
\begin{itemize}
\item[(i)]a cyclic group of order $p^n$,
\item[(ii)]the direct product of a cyclic group of order $p^{n-1}$ and one of order $p$, $n\geq 2$,
\item[(iii)]the modular $p$-group $M_{p^n}=\gen{a,b:a^{p^{n-1}}=b^p=1,a^b=a^{p^{n-2}+1}}$, $n\geq 3$,
\item[(iv)]the dihedral group $D_{2^n}=\gen{a,b:a^{2^{n-1}}=b^2=1,a^b=a^{-1}}$, $n\geq 3$,
\item[(v)]the generalized quaternion group $Q_{2^n}=\gen{a,b:a^{2^{n-1}}=1,a^{2^{n-2}}=b^2,a^b=a^{-1}}$, $n\geq 3$,
\item[(vi)]the semi-dihedral group $SD_{2^n}=\gen{a,b:a^{2^{n-1}}=b^2=1,a^b=a^{{2^{n-2}-1}}}$, $n\geq 3$.
\end{itemize}
\end{theorem}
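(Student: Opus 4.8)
The plan is to recover $G$ from the way an element outside a cyclic maximal subgroup acts on that subgroup by conjugation. Write $|G|=p^n$ and let $A=\gen{a}$ be a cyclic maximal subgroup, so $|A|=p^{n-1}$; since a maximal subgroup of a $p$-group is normal, $A\normal G$ and $[G:A]=p$. The cases $n\le 2$ are immediate (they give types (i) and (ii)), so assume $n\ge 3$, fix $b\in G\setminus A$, and note $G=\gen{a,b}$ and $b^p\in A$. Conjugation by $b$ is an automorphism of $A\cong\Z_{p^{n-1}}$, so $a^b=a^r$ with $\gcd(r,p)=1$; as $A$ is abelian, $b^p$ centralises $a$, whence conjugating $a$ by $b^p$ gives $a^{r^p}=a$, i.e. $r^p\equiv 1\pmod{p^{n-1}}$.

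Everything then hinges on the $p$-torsion of $U:=(\Z/p^{n-1}\Z)^\times$. For odd $p$ the group $U$ is cyclic, so the admissible values of $r$ (those with $r^p\equiv 1$) form the unique subgroup of order $p$, namely $\gen{1+p^{n-2}}$; replacing $b$ by a suitable power $b^k$ — still outside $A$, still generating $G$ with $a$ — I may assume $r\in\{1,\ 1+p^{n-2}\}$. For $p=2$ and $n\ge 4$ we have $U\cong\Z_2\times\Z_{2^{n-3}}$, whose $2$-torsion is the Klein four-group $\{1,\ -1,\ 1+2^{n-2},\ -1+2^{n-2}\}$, and $r$ automatically lies in it; the degenerate case $n=3$, where only $r\in\{1,-1\}$ arises, is dealt with by hand. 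The contrast between a cyclic and a Klein four $p$-torsion group is precisely why odd primes contribute one non-abelian family and $p=2$ contributes four.

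I would then dispatch the cases. If $r\equiv 1$ then $G$ is abelian with an element of order $p^{n-1}$, hence $\Z_{p^n}$ or $\Z_{p^{n-1}}\times\Z_p$: types (i), (ii). For each non-trivial $r$ I determine $b^p=a^s\in A$ and normalise it via the collection identity $(a^jb)^p=a^{j(1+r+\dots+r^{p-1})}b^p$ together with the congruence $1+r+\dots+r^{p-1}\equiv p\pmod{p^{n-1}}$, which holds whenever $r\equiv 1\pmod p$. A short centrality argument — $[a^p,b]=1$ forces $a^p\in Z(G)$, while $p\nmid s$ would force $a\in Z(G)$, contradicting non-commutativity — gives $p\mid s$, so $j$ can be chosen with $(a^jb)^p=1$; replacing $b$ accordingly yields the modular group $M_{p^n}$ (type (iii)), both for $r=1+p^{n-2}$ with $p$ odd and for $r=1+2^{n-2}$. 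When $r=-1$ (only $p=2$), $b^2\in A$ and $(b^2)^b=b^2=(b^2)^{-1}$ give $b^4=1$, so $b^2\in\{1,\ a^{2^{n-2}}\}$; since $(a^jb)^2=b^2$ identically, $b^2$ is an invariant, and the two values give the genuinely distinct groups $D_{2^n}$ (iv) and $Q_{2^n}$ (v). When $r=-1+2^{n-2}$ (only $p=2$, $n\ge 4$), the same computation again leaves $b^2\in\{1,\ a^{2^{n-2}}\}$, but now $(ab)^2=a^{2^{n-2}}b^2$, so the value $a^{2^{n-2}}$ is absorbed and one may take $b^2=1$, producing the semidihedral group $SD_{2^n}$ (vi). The converse is transparent — each listed group has $\gen{a}$ as a cyclic maximal subgroup — and for $n\ge 4$ the six types are pairwise non-isomorphic (element orders and the centre separate them), the only coincidences being the familiar ones at $n=3$ such as $M_8\cong D_8$.

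The step I expect to be the main obstacle is the bookkeeping in this last case analysis: establishing $(a^jb)^p=a^{j(1+r+\dots+r^{p-1})}b^p$ and the estimate $1+r+\dots+r^{p-1}\equiv p\pmod{p^{n-1}}$ cleanly, and then — because that estimate is weaker for $p=2$ — deciding in each $2$-group case whether $b^2$ can be eliminated, which is exactly what distinguishes $D_{2^n}$ and $Q_{2^n}$ (where it cannot) from $M_{2^n}$ and $SD_{2^n}$ (where it can).
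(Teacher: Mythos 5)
This theorem is quoted verbatim from Robinson \cite[5.3.4]{djsr:1996} and the paper supplies no proof of its own, so there is nothing internal to compare against; your argument is the standard textbook proof (essentially Robinson's), and it is correct. The one blemish is the claim that $1+r+\cdots+r^{p-1}\equiv p\pmod{p^{n-1}}$ holds ``whenever $r\equiv 1\pmod p$'': this is an overstatement (for $p=2$ and $r=1+2^{n-2}$ the sum is $2+2^{n-2}\not\equiv 2$), but what your case analysis actually uses is only that this sum has $p$-adic valuation exactly $1$ together with $p\mid s$, and that is true in every case where you invoke it, so the argument goes through.
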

%--------------------------------------------------
\begin{lemma}\label{threecycles}
A non-cyclic finite group whose power graph is $K_{1,4}$-free has exactly three maximal cyclic groups.
\end{lemma}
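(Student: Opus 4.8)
The plan is to convert the $K_{1,4}$-freeness hypothesis into a bound on the number of maximal cyclic subgroups through an independent-set argument, and then to rule out the degenerate cases.

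First I would record the basic dictionary between maximal cyclic subgroups and independent sets of $\p(G)$. Every element of a finite group lies in at least one maximal cyclic subgroup (take a maximal cyclic subgroup containing $\gen{x}$; finiteness guarantees existence), so $G$ is the union of its maximal cyclic subgroups. Moreover, if $M_1$ and $M_2$ are distinct maximal cyclic subgroups with chosen generators $g_1$ and $g_2$, then $g_1\not\sim g_2$ in $\p(G)$: otherwise $g_1\in\gen{g_2}=M_2$ or $g_2\in\gen{g_1}=M_1$, and in either case maximality of one of the two cyclic groups forces $M_1=M_2$. Since distinct maximal cyclic subgroups have distinct generators, selecting one generator from each maximal cyclic subgroup yields an independent set of $\p(G)$. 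Hence the number of maximal cyclic subgroups of $G$ is at most $\alpha(\p(G))$.

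Next I would bound $\alpha(\p(G))$. Since the identity element $1$ is adjacent to every other vertex of $\p(G)$, it lies in no independent set of size at least $2$; consequently any independent set of size $4$ would be contained in $G\setminus\{1\}$, and adjoining $1$ to it would induce a subgraph isomorphic to $K_{1,4}$. As $\p(G)$ is $K_{1,4}$-free, this is impossible, so $\alpha(\p(G))\le 3$. Combining this with the previous paragraph, $G$ has at most three maximal cyclic subgroups. It then remains to exclude the possibility of one or two maximal cyclic subgroups. If $G$ had a single maximal cyclic subgroup $M$, then $G=M$ by the covering property, so $G$ would be cyclic, contrary to hypothesis. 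If $G$ had exactly two maximal cyclic subgroups $M_1,M_2$, then $G=M_1\cup M_2$; since no group is the union of two proper subgroups, one of $M_1,M_2$ would equal $G$, again forcing $G$ to be cyclic — a contradiction. Therefore $G$ has exactly three maximal cyclic subgroups.

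The argument is essentially self-contained, and I do not expect a genuine obstacle: the only points needing care are the existence of maximal cyclic subgroups covering $G$ (immediate from finiteness) and the treatment of the identity vertex, which is precisely what pins the independence number at $\le 3$ rather than $\le 4$. This lemma is the easy structural step that sets up the subsequent classification: once $G$ is known to be a union of three proper (cyclic) subgroups, Theorem~\ref{threecover} and Theorem~\ref{maximalcycle} can be brought to bear in the proof of the full characterization.
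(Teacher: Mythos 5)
Your proof is correct and follows essentially the same route as the paper: generators of distinct maximal cyclic subgroups form an independent set, adjoining the identity yields an induced star, and the cases of one or two maximal cyclic subgroups are excluded because a group is never the union of two proper subgroups. You simply spell out the details (non-adjacency of the generators, the covering property) that the paper leaves implicit.
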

\begin{proof}
Let $G$ be a finite group such that $\p(G)$ is $K_{1,4}$-free. Let $\gen{x_1},\ldots,\gen{x_n}$ denote all maximal cyclic subgroups of $G$. Then $\{x_1,\ldots,x_n\}$ is an independent set, which implies that the subgraph induced by $\{1,x_1,\ldots,x_n\}$ is isomorphic to $K_{1,n}$. This implies that $n\leq3$. If $n\leq2$, then $G$ is the union of at most two cyclic subgroups and hence $G$ is a cyclic group, which is a contradiction. Therefore $n=3$, as required.
\end{proof}
%--------------------------------------------------
\begin{theorem}
If $G$ is a finite group, then $\p(G)$ is $K_{1,4}$-free if and only if $G$ is isomorphic to one of the groups $Q_8$, $\Z_2\times\Z_2$, $\Z_{p^k}$, $\Z_{pqr}$ or $\Z_{p^m q^n}$, where $\{m,n\}\cap\{0,1,2\}\neq\emptyset$ and $p,q,r$ are distinct primes.
\end{theorem}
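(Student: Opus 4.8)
I would begin by reducing $K_{1,4}$-freeness to a bound on the independence number. Since the identity of $G$ is adjacent in $\p(G)$ to every other vertex, any four pairwise non-adjacent vertices (none of which can be the identity) together with $1$ induce a copy of $K_{1,4}$, and conversely the four leaves of an induced $K_{1,4}$ are pairwise non-adjacent. Hence $\p(G)$ is $K_{1,4}$-free if and only if $\alpha(\p(G))\le 3$, and the theorem amounts to determining all $G$ with $\alpha(\p(G))\le 3$; I would prove both directions at once, splitting into the cyclic and non-cyclic cases.

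For $G=\Z_n$, two elements are adjacent in $\p(\Z_n)$ exactly when one of their orders divides the other, because a cyclic group has a unique subgroup of each order; thus an independent set contains at most one element of each order and is precisely an antichain in the divisor lattice of $n$, so $\alpha(\p(\Z_n))$ equals the width of that lattice. Four distinct prime divisors give a $4$-antichain, so $|\pi(G)|\le 3$ is forced. For $n=p^aq^b$ the lattice is the grid $\{0,\dots,a\}\times\{0,\dots,b\}$, of width $\min(a,b)+1$, so $\alpha\le 3$ iff $\min(a,b)\le 2$; for $n=p^aq^br^c$ with $a,b,c\ge 1$ the width is $3$ when $a=b=c=1$ and at least $4$ otherwise (for instance $\{(2,0,0),(1,1,0),(1,0,1),(0,1,1)\}$ is an antichain once $a\ge 2$). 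This isolates exactly $\Z_{p^k}$, $\Z_{p^mq^n}$ with $\min(m,n)\le 2$, and $\Z_{pqr}$, with independence numbers $1$, $\min(m,n)+1$, and $3$ respectively, which also settles the converse for these.

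For the non-cyclic case, assume $\p(G)$ is $K_{1,4}$-free. By Lemma~\ref{threecycles}, $G$ has exactly three maximal cyclic subgroups $M_1,M_2,M_3$, and since every element lies in one of them, $G=M_1\cup M_2\cup M_3$. Theorem~\ref{threecover} then gives that $D:=M_1\cap M_2\cap M_3$ satisfies $M_i\cap M_j=D$ for $i\ne j$, that $G/D\cong\Z_2\times\Z_2$, and that $[G:M_i]=2$; so $D$, being of index $2$ in the cyclic group $M_1$, is cyclic, and $|M_i|=|G|/2=2t$ with $t=|D|$. The crucial point is that $t$ must be a power of $2$. If some odd prime $p$ divided $t$, I would take a generator $c_i$ of $M_i$, let $u$ be the odd part of $2t$, and set $f_i:=c_i^{\,u}$, which has order equal to the $2$-part of $2t$ and hence does not lie in $D$ (whose order has strictly smaller $2$-part); then $f_1,f_2,f_3$ are distinct and pairwise non-adjacent, since $f_i\in\gen{f_j}\le M_j$ would force $f_i\in M_i\cap M_j=D$. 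Picking $w\in D$ of order $p$, the orders of $w$ and each $f_i$ are coprime, so $\{f_1,f_2,f_3,w\}$ is an independent set of size $4$ — a contradiction. Hence $t=2^s$, $G$ is a $2$-group, and $M_1$ is a cyclic maximal subgroup of $G$.

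Finally I would apply Theorem~\ref{maximalcycle}: a non-cyclic $2$-group with a cyclic maximal subgroup is one of $\Z_{2^{n-1}}\times\Z_2$ $(n\ge 2)$, $M_{2^n}$, $D_{2^n}$, $Q_{2^n}$ or $SD_{2^n}$ $(n\ge 3)$. Using that $\p(H)$ is an induced subgraph of $\p(G)$ for $H\le G$, I would dispose of all but two cases: $\Z_{2^{n-1}}\times\Z_2$ for $n\ge 3$ and $M_{2^n}$ for $n\ge 4$ contain $\Z_4\times\Z_2$, which has an independent $4$-set (two non-central involutions together with two elements of order $4$); $M_{2^3}\cong D_8$ and $D_{2^n}\supseteq D_8$, which has five maximal cyclic subgroups, so $\alpha(\p(D_8))\ge 5$; $Q_{2^n}\supseteq Q_{16}$ for $n\ge 4$, which also has five maximal cyclic subgroups; and $SD_{2^n}$ for $n\ge 4$ has $2^{n-2}\ge 4$ pairwise non-adjacent reflections, while $SD_{2^3}\cong\Z_4\times\Z_2$. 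The only non-cyclic survivors are $\Z_2\times\Z_2$ and $Q_8$, and for both $\alpha(\p(G))=3$. Together with the cyclic case, this gives the stated list. I expect the bulk of the work to be this case analysis over the five families of $2$-groups, although it reduces quickly to the small non-examples $\Z_4\times\Z_2$, $D_8$, $Q_{16}$ and the examples $\Z_2\times\Z_2$, $Q_8$; the one genuinely non-routine idea is the coprime-order argument showing $|D|$ is a power of $2$.
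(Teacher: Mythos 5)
Your proof is correct, and while it leans on the same three external ingredients as the paper (Lemma~\ref{threecycles}, Scorza's theorem on unions of three subgroups, and the classification of $p$-groups with a cyclic maximal subgroup), the top-level decomposition is genuinely different. The paper splits into ``$p$-group'' versus ``non-$p$-group'': for odd $p$ it kills non-cyclic $p$-groups with an elementary abelian rank argument, and for non-$p$-groups it shows every Sylow subgroup is cyclic and normal (via three maximal cyclics, or three conjugates, together with a $q$-element) so that $G$ is cyclic, reserving Scorza for the Sylow-$2$ case. You instead split into ``cyclic'' versus ``non-cyclic'', apply Scorza directly to the whole non-cyclic group, and then force $|G|$ to be a power of $2$ by your coprime-order argument on the common intersection $D$: the elements $f_i=c_i^{u}$ landing outside $D$ together with an odd-order $w\in D$ give the forbidden independent $4$-set. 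That step replaces the paper's Sylow-theoretic reduction and is the one piece of your argument with no counterpart in the paper; it buys you a single uniform treatment of all non-cyclic groups at the cost of a slightly more delicate construction. Your antichain/width formulation of the cyclic case is just a cleaner packaging of the paper's explicit order lists ($1,p^2,pq,pr,qr$ and $1,p^3,p^2q,pq^2,q^3$), and the reduction of $K_{1,4}$-freeness to $\alpha(\p(G))\le 3$ is implicit in the paper throughout. Two trivial blemishes: in $\Z_4\times\Z_2$ there are no ``non-central'' involutions (you mean the involutions generating maximal cyclic subgroups of order $2$, and the $4$-set $\{a,ab,b,a^2b\}$ does work), and in the final case analysis you could have shortcut several families by noting that Scorza already forces the three maximal cyclic subgroups to have equal order $|G|/2$, which is how the paper dispatches cases (1)--(5).
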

\begin{proof}
First suppose that $G$ is a $p$-group. If $p\geq3$ and $r=m_p(G)>1$, then $G$ has an elementary abelian $p$-subgroup $H=\gen{a_1,a_2,\ldots, a_r}$ of order $p^r $. Hence $\{1,a_1,a_2,a_1a_2,a_1a_2^{-1}\}$ induces a subgraph isomorphic to $K_{1,4}$, which is a contradiction. Therefore, $G$ is cyclic whenever $p$ is odd. Now, suppose that $p=2$. If $G$ is a cyclic group, then we have nothing to prove. Thus we may assume that $G$ is not cyclic. Since $\p(G)$ is $K_{1,4}$-free, by Lemma \ref{threecycles}, $G$ has exactly three maximal cyclic subgroups, say $\gen{x}$, $\gen{y}$ and $\gen{z}$. Then $G=\gen{x}\cup\gen{y}\cup\gen{z}$ for the elements of $G$ each of which belongs to a maximal cyclic subgroup of $G$. Now, by Theorem \ref{threecover}, $|x|=|y|=|z|$ and $G/N\cong\Z_2\times\Z_2$, where $N=\gen{x}\cap\gen{y}\cap\gen{z}$. Hence, $G$ has cyclic maximal subgroups and, by Theorem \ref{maximalcycle}, we have the following cases:
\begin{itemize}
\item[(1)]$G\cong\Z_{2^{n-1}}\times\Z_2=\gen{a}\times\gen{b}=\gen{a}\cup\gen{b}\cup\gen{ab}$, where $|a|=2^{n-1}$, $|b|=2$ and $n\geq2$. Then $|a|=|b|=2$ and $G\cong\Z_2\times\Z_2$.
\item[(2)]$G\cong M_{2^n}=\gen{a,b:a^{2^{n-1}}=b^2=1,a^b=a^{1+2^{n-2}}}=\gen{a}\cup\gen{b}\cup\gen{ab}$, $n\geq 3$. Then $|a|=|b|=2$, which is a contradiction.
\item[(3)]$G\cong D_{2^n}=\gen{a,b:a^{2^{n-1}}=b^2=1, a^b=a^{-1}}=\gen{a}\cup\gen{b}\cup\gen{ab}$, $n\geq 3$. Then $|a|=|b|=2$, which is a contradiction.
\item[(4)]$G\cong Q_{2^n}=\gen{a,b:a^{2{n-1}}=1,a^{2^{n-2}}=b^2,a^b=a^{-1}}=\gen{a}\cup\gen{b}\cup\gen{ab}$, $n\geq3$. Then $2^{n-1}=|a|=|b|=4$ and $n=3$. Clearly, $\p(Q_8)$ is $K_{1,4}$-free.
\item[(5)]$G\cong SD_{2^n}=\gen{a,b:a^{2^{n-1}}=b^2=1,a^b=a^{{2^{n-2}-1}}}=\gen{a}\cup\gen{b}\cup\gen{ab}$, $n\geq 3$. Then $|a|=|b|=n=2$, which is a contradiction. 
\end{itemize} 
Therefore, $G\cong Q_8$, $\Z_2\times\Z_2$.

Now, assume that $G$ is not a $p$-group. Clearly, $|\pi(G)|\leq3$. Let $P=S_p(G)$ be a Sylow $p$-subgroup of $G$. If $P$ is not cyclic, then by Lemma \ref{threecycles}, $P$ has three maximal cyclic groups, namely $\gen{x}$, $\gen{y}$ and $\gen{z}$. If $w$ is a $q$-element ($q\neq p$), then the set $\{1,x,y,z,w\}$ induces a subgraph isomorphic to $K_{1,4}$, which is a contradiction. Thus $P=\gen{x}$ is a cyclic group. If $P$ is not a normal subgroup of $G$, then $[G:N_G(P)]\geq3$ and hence $P$ has three distinct conjugates, say $\gen{x}$, $\gen{x^g}$ and $\gen{x^h}$. Then $\{1,x,x^g,x^h,w\}$ induces a subgraph isomorphic to $K_{1,4}$, which is a contradiction. Therefore $P\normal G$, which implies that $G$ is a cyclic group. If $\pi(G)=\{p,q,r\}$, then $G\cong\Z_{p^mq^nr^k}$, where $p,q,r$ are distinct primes. If $m\geq2$, then the elements of orders $1,p^2,pq,pr,qr$ induce a subgraph isomorphic to $K_{1,4}$, a contradiction. Thus $m=1$ and similarly $n=k=1$. Then $G\cong\Z_{pqr}$. Finally, suppose that $\pi(G)=\{p,q\}$. Then $G\cong\Z_{p^mq^n}$. If $m,n\geq3$, then the elements of orders $1,p^3,p^2q,pq^2,q^3$ induce a subgraph isomorphic to $K_{1,4}$, which is impossible. Thus $m\leq2$ or $n\leq2$. The converse is obvious and the proof is complete.
\end{proof}
%==================================================
\section{$C_4$-free power graphs} 
In this section, we will give some structural results for a finite group to have a $C_4$-free power graph. The following key lemma will be used frequently in the sequel.
%--------------------------------------------------
\begin{lemma}\label{C4free}
Let $G$ be a finite group. Then $\p(G)$ has an induced $4$-cycle if and only if there exist nontrivial elements $x,y$ of $G$ such that $\gen{x}\not\leq\gen{y}$, $\gen{y}\not\leq\gen{x}$ and $\gen{x}\cap\gen{y}$ is not a prime power group.
\end{lemma}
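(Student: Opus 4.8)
The plan is to argue both directions directly from the adjacency rule $u\sim v\iff\gen{u}\leq\gen{v}$ or $\gen{v}\leq\gen{u}$, together with two elementary facts about cyclic groups: a cyclic group of order $n$ has a unique subgroup of order $m$ for every $m\mid n$, and the subgroups of a cyclic $p$-group form a chain under inclusion (equivalently, a cyclic group possessing two incomparable subgroups is not a prime power group).

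For the sufficiency, I would set $D=\gen{x}\cap\gen{y}$; this is cyclic, and by hypothesis $|D|$ is divisible by two distinct primes $p,q$. Let $\gen{u}$ and $\gen{v}$ be the subgroups of $D$ of orders $p$ and $q$. Then $u,v\in D\leq\gen{x}\cap\gen{y}$, so $x\sim u$, $u\sim y$, $y\sim v$ and $v\sim x$; on the other hand $u\not\sim v$ because $|u|$ and $|v|$ are coprime, and $x\not\sim y$ by hypothesis. Since $|D|$ divides both $|x|$ and $|y|$, we have $|x|,|y|\geq pq$, so the four elements $x,u,y,v$ are pairwise distinct, and they therefore induce a $4$-cycle in $\p(G)$.

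For the necessity, suppose $\p(G)$ contains an induced $4$-cycle, which we may label $a\sim b\sim c\sim d\sim a$ with $a\not\sim c$ and $b\not\sim d$ (note that all four vertices are nontrivial, since $1$ is adjacent to every other vertex). I would first analyze the two edges incident to $a$: the relations $a\sim b$ and $a\sim d$ leave four possibilities, and the two ``mixed'' ones each force $\gen{b}$ and $\gen{d}$ to be comparable (for instance $\gen{d}\leq\gen{a}\leq\gen{b}$), contradicting $b\not\sim d$. Hence either $\gen{a}\leq\gen{b}\cap\gen{d}$, or else $\gen{b},\gen{d}\leq\gen{a}$. The same alternative holds for $c$. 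Of the four combinations, the two ``crossed'' ones are impossible: if $\gen{a}\leq\gen{b}\cap\gen{d}$ while $\gen{b},\gen{d}\leq\gen{c}$, then $\gen{a}\leq\gen{c}$, contradicting $a\not\sim c$. In the surviving cases we either have $\gen{a},\gen{c}\leq\gen{b}\cap\gen{d}$, in which case we set $\{x,y\}=\{b,d\}$, or $\gen{b},\gen{d}\leq\gen{a}\cap\gen{c}$, in which case we set $\{x,y\}=\{a,c\}$; either way $\gen{x}\not\leq\gen{y}$ and $\gen{y}\not\leq\gen{x}$ by the relevant non-edge, and $\gen{x}\cap\gen{y}$ is a cyclic group containing two incomparable subgroups, hence not a prime power group.

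The only step requiring real care is the bookkeeping in the necessity argument: one must verify that every ``mixed'' or ``crossed'' configuration genuinely collapses one of the two forbidden non-edges into an edge. Beyond that the proof is routine — one checks distinctness of the four vertices in the construction for sufficiency and uses the observation that a pair of incomparable cyclic subgroups is precisely what forces a cyclic group to fail to be a prime power group.
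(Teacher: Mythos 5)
Your proof is correct and follows essentially the same route as the paper: exhibit two elements of distinct prime orders in $\gen{x}\cap\gen{y}$ for sufficiency, and for necessity reduce an induced $4$-cycle to the configuration where two opposite vertices lie in the intersection of the cyclic subgroups generated by the other two. The only difference is that your case analysis at each vertex of the $4$-cycle rigorously justifies what the paper dismisses with ``without loss of generality, $u,v\in\gen{x}\cap\gen{y}$,'' which is a worthwhile addition rather than a divergence.
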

\begin{proof}
Assume that $\p(G)$ has an induced $4$-cycle $\{x,u,y,v\}$. Without loss of generality, we may suppose that $u,v\in \gen{x}\cap\gen{y}$. Since $x$ and $y$ are not adjacent, $\gen{x}\not\leq\gen{y}$ and $\gen{y}\not\leq\gen{x}$. If $\gen{x}\cap\gen{y}$ is a $p$-group, then $\gen{x}\cap\gen{y}\setminus\{1\}$ is a complete subgraph of $\p(G)$, which implies that $u\sim v$, a contradiction. Thus $\gen{x}\cap\gen{y}$ is not a $p$-group. Conversely, if $\gen{x}\cap\gen{y}$ is not a $p$-group, then we can choose elements $u,v\in\gen{x}\cap\gen{y}$ of distinct prime orders, from which it follows that $\{x,u,y,v\}$ induces a $4$-cycle, as required.
\end{proof}
%--------------------------------------------------
\begin{corollary}
Let $G$ be finite group whose nontrivial elements have prime power orders. Then $\p(G)$ is $C_4$-free.
\end{corollary}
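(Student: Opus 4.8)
The plan is to derive this immediately from Lemma \ref{C4free}, arguing by contradiction. Suppose $\p(G)$ is not $C_4$-free. Then Lemma \ref{C4free} supplies nontrivial elements $x,y\in G$ with $\gen{x}\not\leq\gen{y}$, $\gen{y}\not\leq\gen{x}$ and, crucially, $\gen{x}\cap\gen{y}$ not a prime power group. The goal is to show this last condition cannot occur under our hypothesis.

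By assumption $x$ is a nontrivial element of prime power order, so $\gen{x}$ is a cyclic group of order $p^k$ for some prime $p$ and some $k\geq 1$. Every subgroup of $\gen{x}$ therefore has order dividing $p^k$; in particular the subgroup $\gen{x}\cap\gen{y}\leq\gen{x}$ is a (possibly trivial) $p$-group, hence a prime power group. This contradicts the conclusion extracted from Lemma \ref{C4free}, and so no induced $4$-cycle exists in $\p(G)$.

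I do not expect any real obstacle: this is a one-step corollary of the key lemma, and the only point meriting a word of care is the degenerate case $\gen{x}\cap\gen{y}=1$. That case is already excluded by Lemma \ref{C4free} itself, since a $4$-cycle through non-adjacent vertices $x,y$ needs two further distinct vertices lying in $\gen{x}\cap\gen{y}$, which a trivial intersection cannot provide; equivalently, one may simply regard the trivial group as a prime power group for the purposes of that statement. Thus the entire argument is the single displayed implication above.
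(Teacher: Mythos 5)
Your argument is correct and is exactly the intended one: the paper states this as an immediate corollary of Lemma \ref{C4free} without writing out a proof, and your observation that $\gen{x}\cap\gen{y}\leq\gen{x}$ forces the intersection to be a $p$-group is the whole content. Your side remark about the degenerate case $\gen{x}\cap\gen{y}=1$ is also handled appropriately.
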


It is worth noticing that the structure of the groups in the above corollary has been clarified in Brandl \cite{rb} and later by others, and that it can be classified as follows.
%--------------------------------------------------
\begin{theorem}[Bannuscher and Tiedt, {\cite[Theorem 2]{wb-gt}}]
Let $G$ be a finite group whose nontrivial elements have prime power orders. Then one of the following holds:
\begin{itemize}
\item[(1)]$G$ is isomorphic to $PSL(2,q)$ ($q=4,7,8,9,17$), $PSL(3,4)$, $Sz(8)$, $Sz(32)$ or $M_{10}$,
\item[(2)]$G$ has a nontrivial normal elementary abelian $2$-subgroup $P$ such that $\frac{G}{P}$ is isomorphic to $PSL(2,4)$, $PSL(2,8)$, $Sz(8)$ or $Sz(32)$,
\item[(3)]$G$ is a $p$-group,
\item[(4)]$G$ is a Frobenius group whose kernel is a $p$-group and its complements are either cyclic $q$-groups ($q\neq p$) or generalized quaternion $2$-groups,
\item[(5)]$G$ is a $3$-step group of order $p^aq^b$ ($p,q$ are primes and $q>2$), i.e., $G=O_{pp'p}(G)$ and $G\supset O_{p'p}(G)$ with
\begin{itemize}
\item[(i)]$O_{p'p}(G)$ is a Frobenius group with kernel $O_p(G)$ and cyclic complement, and
\item[(ii)]$G/O_p(G)$ is a Frobenius group with kernel $O_{pp'}(G)/O_p(G)$.
\end{itemize}
\end{itemize}
\end{theorem}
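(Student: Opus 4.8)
The plan is to recast the hypothesis graph-theoretically: asking that every nontrivial element of $G$ have prime power order is exactly asking that the Gruenberg--Kegel (prime) graph of $G$ carry no edges. If $|\pi(G)|=1$ this is case (3) at once, so I would assume $|\pi(G)|\geq2$, whence the prime graph is disconnected, and invoke the Gruenberg--Kegel structure theorem. It splits the situation into three regimes: $G$ is a Frobenius group; $G$ is a $2$-Frobenius group (equivalently, in the terminology of (5), a ``$3$-step group'': $G=ABC$ with $A,AB\normal G$, $AB$ Frobenius with kernel $A$, and $G/A$ Frobenius with kernel $AB/A$); or $G$ has a nilpotent normal subgroup $N$ with $G/N$ almost simple, its socle a nonabelian simple group $S$ whose prime divisors meet every connected component other than the one containing $\pi(N)$. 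I would treat the three regimes in turn and match each to one of the five conclusions.

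For the two solvable regimes the work is routine once the reduction is in hand. If $G$ is Frobenius with kernel $K$ and complement $H$, then $K$ is nilpotent, hence a direct product of its Sylow subgroups, and it contains no element of composite order, so $K$ is a $p$-group. Now $H$ is a Frobenius complement all of whose elements have prime-power order; using that Frobenius complements have cyclic Sylow subgroups apart from a possibly generalized quaternion Sylow $2$-subgroup, and that two distinct primes dividing $|H|$ would force a non-cyclic subgroup of order the product of two primes, I would conclude $H$ is a cyclic $q$-group ($q\neq p$) or a generalized quaternion $2$-group --- case (4). If $G=ABC$ is $2$-Frobenius, then $A=O_p(G)$ is a $p$-group and $AB/A$ is a $q$-group (it is a Frobenius complement of $AB$ and also, being the nilpotent Frobenius kernel of the quotient $G/A$, of prime-power order); since a $2$-group admits no fixed-point-free automorphism of odd order, the action of the outer complement in the second Frobenius quotient forces $q$ to be odd and $AB/A$ to be cyclic, yielding the ``$3$-step group'' of case (5) with $|G|=p^aq^b$.

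The substantive part is the nonsolvable regime. Since $N$ is nilpotent with no element of composite order it is a $p_0$-group, and since every element of $G/N$ --- hence, through a normal subgroup $M$ of $G$ with $M/N\cong S$, every element of $S$ --- is the image of a prime-power-order element of $G$, the simple group $S$ has all of its element orders prime powers. Here I would appeal to the classification of simple groups with this property (due to Suzuki, by exceptional-character methods): $S$ is one of $PSL(2,q)$ with $q\in\{4,7,8,9,17\}$, $PSL(3,4)$, $Sz(8)$ or $Sz(32)$. If $N=1$, then $G$ is almost simple with one of these socles, and I would sieve the overgroups of $S$ in $\Aut(S)$ for the prime-power-order property; only $M_{10}$ (over $S\cong A_6$) survives beyond $S$ itself, giving case (1). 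If $N\neq1$, then an element $s$ of $G$ mapping to an element of order $r\neq p_0$ of $G/N$ is an $r$-element acting on $N$, and a nontrivial fixed point $v\in N$ of $s$ would produce the composite order $p_0^{c}r^{b}$; hence every $p_0'$-element of $S$ acts fixed-point-freely on $N$. Were $p_0$ odd, $N$ would be a group of odd order on which every involution of $S$ acts fixed-point-freely, forcing $N$ abelian with every involution of $S$ acting as inversion, which for a nonabelian simple group is impossible unless $S$ acts trivially on $N$ --- and a trivial action produces an element of composite order directly. So $p_0=2$, and a closer module-theoretic analysis then forces $N$ elementary abelian, $S$ of characteristic $2$ (hence one of $PSL(2,4),PSL(2,8),Sz(8),Sz(32)$) and $G/N=S$, which is case (2), completing the list.

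I expect the nonsolvable regime to be the genuine obstacle, for two reasons. First, the input classification of simple groups with prime-power element orders is itself a deep result, resting on intricate block- and exceptional-character machinery rather than anything elementary. Second, even granted that list, pinning down precisely which almost simple overgroups and which normal $2$-modules preserve the prime-power-order property is a representation-theoretic bookkeeping task --- it is what produces the exceptional $M_{10}$ in case (1) and the ``elementary abelian $2$-group beneath a characteristic-$2$ simple group'' shape of case (2). The solvable regimes by contrast need nothing past the Gruenberg--Kegel reduction and the standard structure of Frobenius and $2$-Frobenius groups, the only delicate point being the claim that a Frobenius complement with prime-power element orders is cyclic of prime-power order or generalized quaternion.
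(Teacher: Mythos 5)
The paper offers no proof of this statement: it is imported verbatim from Bannuscher and Tiedt \cite[Theorem 2]{wb-gt}, so there is nothing in-paper to compare you against. Judged on its own, your Gruenberg--Kegel outline is the standard modern route to classifying groups all of whose element orders are prime powers, and the solvable half, together with the reduction of the nonsolvable half to Suzuki's list of simple groups with this property, is laid out correctly. But the endgame of the nonsolvable regime --- the step you describe as ``a closer module-theoretic analysis then forces $N$ elementary abelian, $S$ of characteristic $2$ and $G/N=S$'' --- is exactly where the content of conclusion (2) lives, and it is asserted rather than proved. One must decide, for each simple group on Suzuki's list and each almost simple overgroup, whether a nontrivial $2$-group exists on which every element of odd prime order acts fixed-point-freely; this is what excludes $PSL(3,4)$ from conclusion (2) even though it is a characteristic-$2$ simple group, what rules out any $N\neq 1$ below $PSL(2,7)$, $PSL(2,9)$, $PSL(2,17)$ and $M_{10}$, and what forces $N$ elementary abelian and kills all outer automorphisms. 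That is genuine representation-theoretic work, not bookkeeping one can wave at, and your sketch does not contain it.

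Two smaller repairs are also needed. In the $2$-Frobenius case your justification that $q>2$ --- ``a $2$-group admits no fixed-point-free automorphism of odd order'' --- is false as a general statement: $\Z_2\times\Z_2$ admits a fixed-point-free automorphism of order $3$. The conclusion survives because $AB/A$ is a Frobenius complement which is a $q$-group, hence cyclic or generalized quaternion; a cyclic $2$-group has a $2$-group of automorphisms, and a generalized quaternion group has a characteristic central involution fixed by every automorphism, so neither admits a fixed-point-free automorphism of odd order. You should also justify that the outer factor $C$ is a $p$-group, so that $|G|=p^aq^b$ as claimed in (5); the quickest route is that a solvable group with disconnected prime graph has exactly two components, while for a group with all element orders prime powers every component is a single vertex. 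Finally, be explicit that your argument is a reduction to two imported deep theorems (Gruenberg--Kegel and Suzuki's classification of simple groups with prime-power element orders) rather than a self-contained proof; for a statement of this depth that is a legitimate strategy, but it should be flagged as such.
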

%--------------------------------------------------
\begin{definition}
A collection $\Pi$ of non-trivial subgroups of a group $G$ is said to be a \textit{partition} for $G$ if every nontrivial element of $G$ belongs to exactly one subgroup in $\Pi$, that is, $G=\bigcup_{X\in\Pi}X$ and $X\cap Y=1$ for all distinct subgroups $X,Y\in\Pi$. The elements of $\Pi$ are called the \textit{components} of the partition $\Pi$.
\end{definition}
%--------------------------------------------------
\begin{definition} 
Let $G$ be a group and $p$ be a prime. The \textit{Hughes subgroup} $H_p(G)$ of $G$, with respect to $p$, is the subgroup generated by all elements of $G$ whose orders are different from $p$.
\end{definition}
\begin{remark}
It is both easy and obvious that for any group $G$ and prime $p$, we have $H_p(H_p(G))=H_p(G)$.
\end{remark}
%--------------------------------------------------
\begin{theorem}\label{elementform}
Let $G$ be a finite group with $C_4$-free power graph. If $x\in G$ is a non-trivial element, then $|x|=p^m$, $p^mq$ or $pqr$, where $p,q,r$ are distinct primes and $m$ is a positive integer. Moreover, 
\begin{itemize}
\item[(1)]if $|x|=pqr$, then $C_G(x)=\gen{x}$,
\item[(2)]if $|x|=p^mq$ and $m>1$, then $H_p(S_p(C_G(x)))$ is a normal cyclic subgroup of $C_G(x)$ and $\exp(S_q(C_G(x)))=q$,
\item[(3)]if $|x|=pq$, then $|C_G(x)|=p^uq^v$ or $|C_G(x)|=p^uq^vr^w$. If $|C_G(x)|=p^uq^vr^w$, then the Sylow subgroups of $C_G(x)$ have prime exponents. Furthermore, $S_r(C_G(x))$ is a normal cyclic subgroup of $C_G(x)$ and $C_G(x)=\gen{x}\times(\gen{y}\rtimes\gen{z})$, where $|y|=r$ and $|z|$ divides $pq$. If $|C_G(x)|=p^uq^v$, then $\exp(S_r(C_G(x)))=r$ and $H_s(S_s(C_G(x)))$ is a normal cyclic subgroup of $C_G(x)$ for $\{r,s\}=\{p,q\}$.
\end{itemize}
\end{theorem}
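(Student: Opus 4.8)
The plan is to establish the four assertions in the order stated, with the case $|x|=pq$ absorbing almost all of the work. The list of possible orders is immediate from Lemma~\ref{C4free} applied inside $\gen{x}\cong\Z_{|x|}$: subgroups correspond to divisors, two are comparable iff one divisor divides the other, and intersections have order the gcd; so if $p^2q^2$, $p^2qr$ or $pqrs$ divided $|x|$, the divisor pairs $\{p^2q,pq^2\}$, $\{p^2q,pqr\}$, $\{pqr,pqs\}$ respectively would be incomparable with gcd divisible by two primes, giving an induced $C_4$ in $\p(\gen{x})\subseteq\p(G)$. Hence $|x|\in\{p^m,p^mq,pqr\}$. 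For $|x|=pqr$ and $y\in C_G(x)$ the group $\gen{x,y}$ is abelian; if cyclic, its generator has order divisible by $pqr$, hence equal to $pqr$, so $y\in\gen{x}$; if non-cyclic it contains some $\Z_\ell\times\Z_\ell$, where $\ell\in\{p,q,r\}$ (otherwise an element of order $pqr\ell$ appears), and then a second element of order $\ell$ together with the two off-$\ell$ prime-power parts of $x$ yields incomparable cyclic subgroups of order $pqr$ meeting in a subgroup of composite order --- a contradiction. So $C_G(x)=\gen{x}$.

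The remaining cases rely on one recurring device. Writing $x=x_px_q$ with $x_p,x_q$ central in $C:=C_G(x)$, any pair of incomparable cyclic $p$-subgroups $\gen{u},\gen{v}$ of $C$ with $\gen{u}\cap\gen{v}\neq1$ becomes, after multiplying by $x_q$, a pair of incomparable cyclic subgroups of order divisible by $pq$, hence a $C_4$ by Lemma~\ref{C4free} (symmetrically with $x_p$); and a copy of $\Z_{p^2}\times\Z_{p^2}$ or $\Z_{p^2}\times\Z_p$ inside $C$ always supplies such a pair. For $|x|=p^mq$, $m>1$, one has $|x_p|=p^m\ge p^2$; a $q$-element of order $q^2$ would generate with $x_p$ a cyclic group of order divisible by $p^2q^2$, so $\exp(S_q(C))=q$. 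Using the device, every $p$-element $u\in C$ of order $\ge p^2$ has $\gen{u}$ comparable with $\gen{x_p}$ (the alternative $\gen{u}\cap\gen{x_p}=1$ forces a $\Z_{p^2}\times\Z_{p^2}$), hence any two such $\gen{u}$ are comparable; they form a chain with maximum $\gen{c}$, so $H_p(S_p(C))=\gen{c}$ is cyclic, and normal in $C$ since conjugates of $c$ are again $p$-elements of maximal order.

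Now let $|x|=pq$, $x=x_px_q\in Z(C)$. The device first shows that every element of $C$ of order $\ge p^2$ contains $\gen{x_p}$ (and similarly for $q$), that $C$ has no element of order $p^2q^2$, and that for a prime $r\notin\{p,q\}$ dividing $|C|$ every $r$-element has order $r$ (an $r$-element $w$ of larger order gives $\gen{xw}\cong\Z_{pq|w|}$, an inadmissible order). For such $r$, part~(1) applied to $xw$ (order $pqr$) gives $C_C(w)=\gen{xw}$, forcing $S_r(C)$ cyclic of order $r$; comparing $\gen{xw}$ with $\gen{xw^g}$ via Lemma~\ref{C4free} forces $S_r(C)\normal C$; and two such primes would commute, yielding an element of order $pqrr'$. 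Hence $|C|=p^uq^v$ or $p^uq^vr$. In the first case, if both Sylow subgroups had exponent exceeding their prime, choosing $u'$ of order $p^2$ and $w'$ of order $q^2$ we would get $\gen{x_p}\le\gen{u'}$, $\gen{x_q}\le\gen{w'}$, so $\gen{u'x_q}$ and $\gen{x_pw'}$ are incomparable with intersection $\gen{x}\cong\Z_{pq}$; thus some $S_r(C)$ ($r\in\{p,q\}$) has exponent $r$, and for the other prime $s$ the chain argument (all $s$-elements of order $\ge s^2$ contain $\gen{x_s}$, hence pairwise meet nontrivially, hence are comparable) makes $H_s(S_s(C))$ normal cyclic. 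In the second case set $N=S_r(C)=\gen{y}\normal C$; part~(1) gives $L:=C_C(N)=\gen{xy}$, so $C/L$ is cyclic, $\gen{\bar x}=L/N$ is central in the consequently abelian $K=C/N$, and $\gen{\bar x}$ splits off: $K=\gen{\bar x}\times\bar Z$ with $\bar Z$ cyclic of order dividing $pq$. The device rules out $S_p(C)$ cyclic of order $\ge p^2$ (a non-normal pair of order-$p^2$ Sylow subgroups of $\gen{y,a^{p^{u-2}}}$ shares the central $\gen{x_p}$) and rules out $\Z_{p^2}\times\Z_p\le S_p(C)$; since $S_p(C)\cong S_p(K)$ is abelian, this forces $S_p(C)$ elementary abelian, and the cyclic quotient $S_p(C)/\gen{x_p}$ bounds $|S_p(C)|\le p^2$; likewise for $q$, while $S_r(C)=\gen{y}$ is cyclic, so all Sylow subgroups of $C$ have prime exponent. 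Lifting $\bar Z$ to a cyclic complement $\gen{z}$ of $N$ in its preimage by Schur--Zassenhaus ($r$ and $|z|\mid pq$ are coprime), an order count gives $C=\gen{x}\times(N\rtimes\gen{z})=\gen{x}\times(\gen{y}\rtimes\gen{z})$.

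The main obstacle is this last case $|C_G(x)|=p^uq^vr$: getting from ``$C_G(x)$ has a normal cyclic Sylow $r$-subgroup'' to the precise decomposition $\gen{x}\times(\gen{y}\rtimes\gen{z})$ with $|z|\mid pq$, while simultaneously forcing $S_p$ and $S_q$ to be elementary abelian of rank at most two, requires interleaving the $C_4$-manufacturing trick with a Schur--Zassenhaus splitting and a short amount of abelian-group bookkeeping. Everything else reduces, through Lemma~\ref{C4free}, to exhibiting one explicit incomparable pair of cyclic subgroups whose intersection has composite order.
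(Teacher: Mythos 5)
Your proposal is sound in its overall architecture, reaches every claim of the theorem, and in several places takes a genuinely different route from the paper. For part (2) and the two-prime case of part (3), where the paper passes through partitions of $p$-groups, Zappa's theorem and the idempotence of the Hughes subgroup, you instead observe that every $p$-element $u$ of order at least $p^2$ in $C_G(x)$ must contain $x_p$ (otherwise $\gen{u}\times\gen{x_p}\supseteq\Z_{p^2}\times\Z_p$ supplies the forbidden incomparable pair after multiplying by $x_q$), so that all such cyclic subgroups form a chain whose top is exactly $H_p(S_p(C_G(x)))$; this is cleaner and avoids the external citation. In the three-prime case, your use of $C_G(xy)=\gen{xy}$ to make $C_G(x)/S_r(C_G(x))$ abelian, followed by splitting off $\gen{\bar x}$ and a Schur--Zassenhaus lift, is a legitimate alternative to the paper's direct computation with $C_{C_G(x)}(y)$ and a minimal-order choice of $z$; both yield $\gen{x}\times(\gen{y}\rtimes\gen{z})$ with $|z|$ dividing $pq$.

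There is one incomplete step. In the case $|C_G(x)|=p^uq^vr^w$ you rule out $\exp(S_p(C_G(x)))>p$ by excluding $\Z_{p^2}\times\Z_p$ (fine) and by excluding a cyclic Sylow $p$-subgroup of order $\geq p^2$ via ``a non-normal pair of order-$p^2$ Sylow subgroups of $\gen{y,a^{p^{u-2}}}$.'' That disposes only of the sub-case where the Sylow $p$-subgroup of $\gen{y,a^{p^{u-2}}}$ is non-normal; when it is normal you still owe a contradiction. One is available: $y$ fixes the central element $x_p\neq1$, and a nontrivial automorphism of $\Z_{p^2}$ of order prime to $p$ acts without nontrivial fixed points, so $y$ would centralize an element of order $p^2$, contradicting $C_{C_G(x)}(y)=\gen{xy}$ of order $pqr$. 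Simpler still, and entirely in the spirit of your own device: if $z\in C_G(x)$ has order $p^2$, then $x_p\in\gen{z}$, so $\gen{zx_q}$ and $\gen{xy}$ are incomparable cyclic subgroups (of orders $p^2q$ and $pqr$) whose intersection contains $\gen{x}\cong\Z_{pq}$, and Lemma~\ref{C4free} finishes. With that repair the argument is complete. (Your aside that two primes $r,r'\notin\{p,q\}$ ``would commute'' is justified only because you have already shown both Sylow subgroups are normal in $C_G(x)$; it is worth saying so explicitly, since the paper's shorter route---comparing $\gen{xw}$ and $\gen{xw'}$ directly---needs no commuting at all.)
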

\begin{proof}
Let $x\in G$. We discuss on the number of primes dividing the order of $x$.

(i) If $|x|$ is divisible by four distinct primes $p,q,r,s$, then there exist elements $x_1, x_2, x_3, x_4$ in $\gen{x}$ whose orders are equal to $pqr,p,pqs,q$, respectively, and the subgraph induced by $x_1,x_2,x_3,x_4$ is a $4$-cycle, which is a contradiction.

(ii) Let $|x|=p^mq^nr^k$, where $m,n,k$ are natural numbers. If $|x|\neq pqr$, then we may assume that $m\geq2$. Hence there exist elements $x_1,x_2,x_3,x_4$ in $\gen{x}$ with orders $p^2q,p,pqr,q$, respectively. Then, the subgraph induced by $x_1,x_2,x_3,x_4$ is a $4$-cycle, which is a contradiction. Therefore $|x|=pqr$.

(iii) Let $|x|=p^mq^n$ ($m\geq n$), where $m,n$ are natural numbers. If $n\geq2$, then there exist elements $x_1,x_2,x_3,x_4$ in $\gen{x}$ with orders $pq^2,p,p^2q,q$, respectively. Hence, the subgraph induced by $x_1,x_2,x_3,x_4$ is a $4$-cycle, which is a contradiction. Thus $|x|=p^mq$.

Hence the order of non-trivial elements of $G$ equals $p^m$, $p^mq$ or $pqr$ for some primes $p,q,r$. In the remainder, we shall treat the structure of centralizers of elements of $G$.

First suppose that $|x|=pqr$. Clearly $\omega(C_G(x))\subseteq\{1,p,q,r,pq,pr,qr,pqr\}$. Let $g\in C_G(x)\setminus\gen{x}$ be an element of prime order $s\in\{p,q,r\}$. Then, by Lemma \ref{C4free}, $x^sg,x$ generate an induced $4$-cycle, which is a contradiction. Thus $C_G(x)$ has a unique subgroup of each prime order. On the other hand, $\exp(S_s(C_G(x))=s$, which implies that $S_s(C_G(x))\cong\Z_s$ for each $s\in\{p,q,r\}$. Hence $C_G(x)=\gen{x}$.

Now assume that $|x|=p^mq$. If $m>1$ then we have $\pi(C_G(x))=\{p,q\}$ and $\exp(S_q(C_G(x)))=q$. If $y,z\in S_p(C_G(x))$ such that $\gen{y}\nleqslant\gen{z}$, $\gen{z}\nleqslant\gen{y}$ and $\gen{y}\cap\gen{z}\neq 1$, then by Lemma \ref{C4free}, the elements $yx^{p^m}$ and $zx^{p^m}$ generate an induced $4$-cycle, which is impossible. Thus, the maximal cyclic subgroups of $S_p(C_G(x))$ form a partition $\Pi$ for $S_p(C_G(x))$. If $S_p(C_G(x))$ is non-cyclic, then by \cite{gz:2003}, $H_p(S_p(C_G(x)))\neq S_p(C_G(x))$. Since $H_p(S_p(C_G(x)))\cap\Pi$ is a partition for $H_p(S_p(C_G(x)))$, it follows that $H_p(S_p(C_G(x)))$ is cyclic for $H_p(H_p(S_p(C_G(x))))=H_p(S_p(C_G(x)))$. Put $H_p(S_p(C_G(x)))=\gen{y}$. If $\gen{y}$ is not a normal subgroup of $C_G(x)$, then $\gen{y}\neq\gen{y}^g$ for some $g\in G$. Hence, the elements $yx^{p^m}$ and $y^gx^{p^m}$ generate an induced $4$-cycle, which is a contradiction. Therefore $H_p(S_p(C_G(x)))$ is a normal cyclic subgroup of $C_G(x)$.

Finally suppose that $m=1$. Let $y,z\in C_G(x)$ be elements of distinct prime orders $r,s$ different from $p,q$, respectively. Then by Lemma \ref{C4free}, $xy$ and $xz$ generate an induced $4$-cycle, which is a contradiction. Hence $|C_G(x)|=p^uq^v$ or $p^uq^vr^w$ for some prime $r\neq p,q$. 

First, we assume that $|C_G(x)|=p^uq^vr^w$. We show that the Sylow subgroups of $C_G(x)$ have prime exponents. If $y\in C_G(x)$ is an element of order $r^2$, then $|xy|=pqr^2$, which is impossible. Thus $\exp(S_r(C_G(x)))=r$. If $y,z\in C_G(x)$ are elements with orders $r$ and $q^2$, respectively, then by Lemma \ref{C4free}, we should have $\gen{xy}\cap\gen{xz}\neq\gen{x}$. Thus $x^p\not \in \gen z$, which implies that $\gen{x,z}\cong\Z_p\times\Z_q\times\Z_{q^2}$. But $H_q(\Z_q\times\Z_{q^2})$ is not cyclic contradicting part (2). Therefore $\exp(S_q(C_G(x)))=q$ and similarly $\exp(S_p(C_G(x)))=p$. If $y,y'\in C_G(x)$ are elements of order $r$ such that $\gen{y }\neq\gen{y'}$, then by Lemma \ref{C4free}, $xy$ and $xy'$ generate an induced $4$-cycle, which is a contradiction. Therefore, $S_r(C_G(x))=\gen{y}$ is a normal subgroup of $C_G(x)$. Put $C:=C_{C_G(x)}(y)$. If $|C|>pqr$, then we may assume that $|C|=p^iq^jr$ for some $i>1$. Hence $\gen{x^q}\subset S_p(C)$. If $g\in S_p(C)\setminus\gen{x^q}$, then by Lemma \ref{C4free}, $xy$ and $x^pyg$ generate an induced $4$-cycle, which is a contradiction. Thus $|C|=pqr$. Clearly $C_G(x)/C=\gen{zC}$ is cyclic for it is isomorphic to a subgroup of $\Aut(\gen{y})$. Since $\exp(S_s(C_G(x)))=s$ for $s\in\{p,q\}$, it follows that $|zC|$ divides $pq$. Hence $z^{|zC|}=1$ if we further assume that $|z|$ is minimal subject to the given conditions. Thus $\gen{z}\cap C=1$ and $\gen{x}\cap\gen{y,z}=1$. Clearly $C=\gen{x,y}$. Hence $C_G(x)=\gen{x,y,z}=\gen{x}\times\gen{y,z}=\gen{x}\times(\gen{y}\rtimes\gen{z})$.

Now, assume that $|C_G(x)|=p^uq^v$. If $\exp(S_s(C_G(x)))=s$ for $s=p$ and $q$, then we have nothing to prove. Thus we may assume that $\exp(S_q(C_G(x)))>q$ and $y,z\in S_q(C_G(x))$ be elements with orders greater than $q$. It is easy to see, by part (2) applied to groups $\gen{x,y}$ and $\gen{x,z}$, that $x^p\in\gen{y}\cap\gen{z}$. If $y$ and $z$ are not adjacent, then by Lemma \ref{C4free}, the elements $xy$ and $xz$ induce a $4$-cycle, which is a contradiction. Thus $H_q(S_q(C_G(x)))$ is a normal cyclic subgroup of $C_G(x)$. If $\exp(S_p(C_G(x)))>p$, then similarly $H_p(S_p(C_G(x)))$ is a normal cyclic subgroup of $G$ from which it follows that $C_G(x)$ contains an element of order $p^2q^2$, which is impossible. Therefore $\exp(S_p(C_G(x)))=p$ and the proof is complete.
\end{proof}
%--------------------------------------------------
\begin{theorem}\label{nilpotentC_4}
Let $G$ be a finite nilpotent group. Then $\p(G)$ is $C_4$-free if and only if 
\begin{itemize}
\item[(1)]$G\cong\Z_{pqr}$, 
\item[(2)]$G=P\times Q$, $H_p(P)$ is cyclic and $\exp(Q)=q$, in which $P$ is a $p$-group and $Q$ is a $q$-group, or
\item[(3)]$G$ is a $p$-group,
\end{itemize}
where $p,q,r$ are distinct primes.
\end{theorem}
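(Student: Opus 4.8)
The plan is to use Theorem~\ref{elementform} to restrict $|\pi(G)|$ to at most $3$ and to control element orders, and to use Lemma~\ref{C4free} as the sole tool for building or excluding induced $4$-cycles; writing a finite nilpotent $G$ as the direct product of its Sylow subgroups, I would treat $|\pi(G)|=1,3,2$ in turn for the forward implication and verify the three listed families directly for the reverse one.

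For the ``if'' direction, a $p$-group is covered by the corollary to Lemma~\ref{C4free} (its nontrivial elements have prime-power order), so (3) holds. For (1), I would note that in $\Z_{pqr}$ two cyclic subgroups whose intersection has order divisible by two distinct primes must each have order $pq$ (or equal the whole group, hence be comparable), and $\Z_{pqr}$ has a unique subgroup of order $pq$; so Lemma~\ref{C4free} forbids a $4$-cycle. For (2), assume for contradiction that $x,y$ witness an induced $4$-cycle as in Lemma~\ref{C4free}, split them into their $p$- and $q$-parts, observe that the $q$-parts generate cyclic groups of order at most $q$ and hence must generate the same subgroup of order $q$ in order that $\gen{x}\cap\gen{y}$ have two prime divisors, while the $p$-parts $x_p,y_p$ then satisfy $\gen{x_p}\not\le\gen{y_p}$, $\gen{y_p}\not\le\gen{x_p}$ and $\gen{x_p}\cap\gen{y_p}\neq 1$; this forces $|x_p|,|y_p|\ge p^2$, so $x_p,y_p\in H_p(P)$, which is cyclic, making $\gen{x_p}$ and $\gen{y_p}$ comparable --- a contradiction.

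For the ``only if'' direction, suppose $\p(G)$ is $C_4$-free. Theorem~\ref{elementform} gives $|\pi(G)|\le 3$. If $|\pi(G)|=1$ then $G$ is a $p$-group, case (3). If $|\pi(G)|=3$, say $G=P\times Q\times R$, then each Sylow subgroup has exponent equal to its own prime (an element of order $p^2qr$ would contradict Theorem~\ref{elementform}), and moreover each Sylow subgroup has prime order: otherwise $P$, say, has distinct subgroups $\gen{a_1},\gen{a_2}$ of order $p$, and for prime-order $b\in Q$, $c\in R$ the subgroups $\gen{a_1bc}$ and $\gen{a_2bc}$ are incomparable with intersection $\gen{b}\times\gen{c}$ of order $qr$, so Lemma~\ref{C4free} produces a $4$-cycle. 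Hence $G\cong\Z_{pqr}$, case (1).

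The main work, and the step I expect to be the real obstacle, is the case $|\pi(G)|=2$, $G=P\times Q$. By Theorem~\ref{elementform} no element has order $p^2q^2$, so at least one Sylow subgroup has exponent equal to its prime; relabelling, assume $\exp(Q)=q$, and it remains to show $H_p(P)$ is cyclic. Fix $c\in Q$ of order $q$. The key claim is that if $y,z\in P$ have order $\ne p$ with $\gen{y},\gen{z}$ incomparable and $\gen{y}\cap\gen{z}\neq 1$, then $\gen{yc}=\gen{y}\times\gen{c}$ and $\gen{zc}=\gen{z}\times\gen{c}$ are incomparable cyclic subgroups whose intersection $(\gen{y}\cap\gen{z})\times\gen{c}$ is not a prime power group, so Lemma~\ref{C4free} would give a $4$-cycle. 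Therefore any two distinct maximal cyclic subgroups of $P$ meet trivially (a short separate check disposes of pairs involving a maximal cyclic subgroup of order $p$), i.e.\ the maximal cyclic subgroups of $P$ form a partition $\Pi$ of $P$. Then, exactly as in the proof of Theorem~\ref{elementform}(2), either $P$ is cyclic (so $H_p(P)\le P$ is cyclic), or $P$ is non-cyclic and \cite{gz:2003} gives $H_p(P)\neq P$; since $H_p(P)\cap\Pi$ is a partition of $H_p(P)$ and $H_p(H_p(P))=H_p(P)$, that group must be cyclic. This places $G$ in family (2), completing the characterization. The delicate points are recognizing the partition structure and then importing the $p$-group partition/Hughes-subgroup machinery; everything else is a direct application of Lemma~\ref{C4free}.
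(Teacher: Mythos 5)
Your proof is correct and follows essentially the same route as the paper: restrict $|\pi(G)|$ via Theorem~\ref{elementform}, use Lemma~\ref{C4free} to force the maximal cyclic subgroups of $P$ into a partition, and then invoke Zappa's theorem together with $H_p(H_p(P))=H_p(P)$ to get cyclicity of $H_p(P)$. Your handling of the $\Z_{pqr}$ case (building an explicit $4$-cycle from two subgroups of order $p$) and of the converse for family (2) (splitting $x,y$ into $p$- and $q$-parts) are small, clean variants of the paper's arguments, but not a different method.
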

\begin{proof}
Since $G$ is nilpotent, by Theorem \ref{elementform}, we have $|\pi(G)|\leqslant 3$. We have three cases:

(1) If $\pi(G)=\{p,q,r\}$, then there exists an element $z\in Z(G)$ of order $pqr$, from which, by Theorem \ref{elementform}(1), it follows that $G=C_G(z)=\gen{z}\cong\Z_{pqr}$.

(2) Suppose $\pi(G)=\{p,q\}$ and $G=P\times Q$, where $P$ is a $p$-group and $Q$ is a $q$-group. If $\exp(P)=p$ and $\exp(Q)=q$, then there is nothing to prove. Thus we may assume without loss generality in conjunction with Theorem \ref{elementform}(2) that $\exp(P)>p$ and $\exp(Q)=q$. If $P$ is cyclic then so is $H_p(P)$. If $P$ is not cyclic, then there exist elements $x,y\in P$ such that neither $\gen{x}\leq\gen{y}$ nor $\gen{y}\leq\gen{x}$. Hence, if $\gen{x}\cap \gen{y}\neq 1$, then by Lemma \ref{C4free}, the elements $xw$ and $yw$ produce an induced $4$-cycle for all nontrivial $q$-element $w$ of $Q$, which is contradiction. Thus the maximal cyclic subgroups of $P$ partition $P$ and, by the same argument as before, $H_p(P)$ is cyclic.

(3) there is nothing to prove.

Conversely, if $G\cong\Z_{pqr}$ or $G$ is finite $p$-group, then clearly $\p(G)$ is $C_4$-free. Now, suppose that $G$ is a group as in part (2). Suppose on the contrary that $\p(G)$ has an induced $4$-cycle. Then there exist elements $x,y$ such that neither $\gen{x}\leq\gen{y}$ nor $\gen{y}\leq\gen{x}$, and $\gen{x}\cap\gen{y}$ is not a group of prime power order. Let $\gen{x}\cap\gen{y}=\gen{a}\times\gen{b}$, where $a$ is a $p$-element and $b$ is a $q$-element. Then $x=cb$, $y=db$ and $\gen{c}\cap\gen{d}=\gen{a}$, where $c,d$ are $p$-elements. If $H_p(P)=P$, then $P$ is cyclic and either $\gen{x}\leq\gen{y}$ or $\gen{y}\leq\gen{x}$, which is a contradiction. Also, if $H_p(P)\neq P$, then $P$ has a non-trivial partition with cyclic components, which implies that either $\gen{x}\leq\gen{y}$ or $\gen{y}\leq\gen{x}$, which is another contradiction. The proof is complete.
\end{proof}
%==================================================
\subsection{Groups with prescribed centers}
Utilizing Theorems \ref{elementform} and \ref{nilpotentC_4}, we can give further results for the groups under investigation when the center of group is divisible by at least two primes.
%--------------------------------------------------
\begin{theorem}
Let $G$ be a group with $C_4$-free power graph. If $Z(G)$ is not a $p$-group, then
\begin{itemize}
\item[(1)]$G\cong\Z_{pqr}$,
\item[(2)]$G=P\times Q$ or $G=(P\times Q)\rtimes\Z_q$, where $P$ is cyclic, $\exp(Q)=q$ and $C_G(P)=P\times Q$,
\item[(3)]$G=D_{2^n}\rtimes Q$, where $\exp(Q)=q$,
\item[(4)]$G=\Z_{p^n}\rtimes Q$, where $\exp(Q)=q$,
\item[(5)]$G=\Z_{pq}\times(\Z_r\rtimes\Z_p)$, where $C_G(S_r(G))\cong\Z_{pqr}$, or
\item[(6)]$G=\Z_{pq}\times(\Z_r\rtimes\Z_{pq})$, where $C_G(S_r(G))\cong\Z_{pqr}$.
\end{itemize}
\end{theorem}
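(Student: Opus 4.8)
The plan is to leverage the fact that $Z(G)$ contains an element $z$ whose order is divisible by at least two primes, and then invoke Theorem \ref{elementform} with $C_G(z)=G$. First I would split according to whether $|z|$ (or more precisely, the primes dividing $Z(G)$) involves three primes, or exactly two. If some element of $Z(G)$ has order $pqr$, then by Theorem \ref{elementform}(1) we get $G=C_G(z)=\gen{z}\cong\Z_{pqr}$, giving case (1). So from now on $Z(G)$ is divisible by exactly two primes $p,q$, and every central element has order dividing $p^aq$ or $pq$ (using the element-order restrictions from Theorem \ref{elementform}); pick a central element $z$ whose order is $p^mq$ with $m$ as large as possible, or $pq$ if no central element has $p$-part exceeding $p$.

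Next I would feed $z$ into Theorem \ref{elementform}(2) or (3), noting crucially that $C_G(z)=G$. In the case $|z|=p^mq$ with $m>1$, part (2) tells us $H_p(S_p(G))$ is a normal cyclic subgroup and $\exp(S_q(G))=q$. Writing $P=S_p(G)$, the normal cyclic Hughes subgroup $H_p(P)$ is characteristic, and since $z$ is central with large $p$-part, $H_p(P)$ must contain $z$'s $p$-part, so $P$ has a cyclic maximal-index structure; Theorem \ref{maximalcycle} then should cut $P$ down to $\Z_{p^n}$, $M_{p^n}$, $D_{2^n}$, $Q_{2^n}$ or $SD_{2^n}$, and a $4$-cycle argument (à la the $K_{1,4}$ proof) eliminates $M_{p^n}$, $Q_{2^n}$, $SD_{2^n}$, leaving $\Z_{p^n}$ or $D_{2^n}$, which are cases (4) and (3). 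The action of the complement on $P$ and the fact that $\exp(Q)=q$ for a Sylow $q$-subgroup (together with $Q$ not necessarily normal) give the semidirect product shape $\Z_{p^n}\rtimes Q$ or $D_{2^n}\rtimes Q$; one checks $G$ can involve no third prime because a third prime in $G$ forces (via Theorem \ref{elementform}) an element-order contradiction with $z\in Z(G)$ of order $p^mq$, $m>1$.

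In the case where the maximal $p$-part of central elements is just $p$, i.e.\ $|z|=pq$, I would apply Theorem \ref{elementform}(3) to $z$ with $C_G(z)=G$: either $|G|=p^uq^v$, in which case $G$ is nilpotent-ish in the relevant sense and the structure $H_s(S_s(G))$ cyclic plus $\exp$ of the other Sylow prime gives, via Theorem \ref{nilpotentC_4} style analysis, that $G=P\times Q$ with $P$ cyclic and $\exp(Q)=q$ (so case (2) with trivial extra $\Z_q$, and $C_G(P)=P\times Q$); or $|G|=p^uq^vr^w$, in which case part (3) hands us directly $G=\gen{x}\times(\gen{y}\rtimes\gen{z})$ with $|y|=r$, $\gen{y}=S_r(G)$ normal cyclic, $|z|\mid pq$, and $C_G(S_r(G))\cong\Z_{pqr}$ — this is precisely cases (5) and (6) according to whether $|z|=p$ (or $q$) versus $|z|=pq$, and case (2)'s $(P\times Q)\rtimes\Z_q$ alternative arises when the non-central part of the $q$-Sylow contributes an extra $\Z_q$ acting on $P$. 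The main obstacle I anticipate is the bookkeeping in this last case: disentangling which prime plays the role of $r$ versus $p,q$, checking that $C_G(P)=P\times Q$ (respectively $C_G(S_r(G))\cong\Z_{pqr}$) really holds rather than something larger, and verifying that no configuration outside the six listed survives — this requires carefully re-running the Lemma \ref{C4free} $4$-cycle test on products $xw$, $yw$ for witnesses $x,y$ in a Sylow subgroup and $w$ in a complement, exactly as in the proof of Theorem \ref{nilpotentC_4}, but now with a possibly nonabelian action to track. The converse direction (checking each of (1)--(6) genuinely yields a $C_4$-free power graph) should be routine: in each listed group one verifies that whenever $\gen{x}\cap\gen{y}\neq 1$ with incomparable cyclic subgroups, the intersection is a prime power group, using the cyclic/prime-exponent constraints built into the description.
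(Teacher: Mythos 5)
Your overall strategy --- push a central element of composite order into Theorem \ref{elementform} with $C_G(z)=G$ and branch on how many primes divide $|Z(G)|$ --- is exactly the paper's, and your handling of the three-prime case and of the branch $|G|=p^uq^vr^w$ (yielding cases (1), (5), (6)) matches the paper's. The genuine gap is in the two-prime analysis: you have attached the conclusions to the wrong branches, and the configuration that actually produces cases (3) and (4) is never analyzed.

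Concretely: if some central element has order $p^mq$ with $m>1$, then $Z(G)$ contains a cyclic $p$-subgroup of order at least $p^2$, and (as in the paper, via Theorem \ref{nilpotentC_4}(2) applied to $\gen{Z(G),x}$ for each $p$-element $x$) $G$ has a unique subgroup of order $p$; hence $S_p(G)$ is cyclic or generalized quaternion, and quaternion is excluded. This branch yields only case (2) --- $D_{2^n}$ cannot occur here because its center has order $2$ --- so your claim that it produces cases (3) and (4) is wrong. Cases (3) and (4) in fact arise in your \emph{other} branch, where every central element has order dividing $pq$ yet $S_p(G)$ has exponent greater than $p$ (the central element of order $p$ sits inside a longer, non-central cyclic $p$-subgroup). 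Your treatment of that branch concludes ``$G=P\times Q$ with $P$ cyclic'' from the cyclicity of $H_s(S_s(G))$ given by Theorem \ref{elementform}(3); that inference is false, since $D_{2^n}$ has cyclic Hughes subgroup $\gen{a}$ without being cyclic, and the dihedral possibility genuinely survives. The missing work is precisely the paper's argument there: uniqueness (hence normality) of the cyclic subgroup $\gen{x}$ of maximal $p$-power order via Lemma \ref{C4free}, the bound $[S_p(G):\gen{x}]\leq p$ from the action on $\gen{x}$, and then Theorem \ref{maximalcycle} to land on $\Z_{p^n}$ or $D_{2^n}$. A further, smaller gap: your step ``$H_p(P)$ is cyclic, so $P$ has a cyclic maximal subgroup'' is unjustified, as nothing you invoke bounds $[P:H_p(P)]$ by $p$.
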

\begin{proof}
If $|\pi(Z(G))|\geq3$, then by Theorem \ref{elementform}, $G\cong\Z_{pqr}$ for some distinct primes $p,q$ and $r$. 

Now, suppose that $\pi(Z(G))=\{p,q\}$. Then $Z(G)=P\times Q$, where $P$ is a $p$-group and $Q$ is a $q$-group. We have two cases: 

Case 1: $\exp(P)>p$. By Theorem \ref{elementform}(2), $\pi(G)=\{p,q\}$. Also, by Theorem \ref{nilpotentC_4}(2), $P$ is cyclic and $\exp(Q)=q$. If $x\in S_p(G)$, then $\gen{Z(G),x}=\gen{P,x}\times Q$. By Theorem \ref{nilpotentC_4}(2), $\gen{P,x}$ is cyclic, which implies that $G$ has a unique subgroup of order $p$. Thus $S_p(G)$ is either a cyclic group or a generalized quaternion $2$-group. If $S_p(G)$ is a generalized quaternion $2$-group, there there exists $x\in S_p(G)\setminus P$ such that $|x|=4$. Since $\gen{P,x}$ is cyclic and $\exp(P)\geq4$ it follows that $x\in P$, which is a contradiction. Thus $S_p(G)$ is cyclic. Put $S_p(G)=\gen{x}$. If $\gen{x}\not\normal G$, then $\gen{x}\neq\gen{x}^g$ for some $g\in G$. But then by Lemma \ref{C4free}, $xy$ and $x^gy$ give rise to an induced $4$-cycle for all $y\in Q\setminus\{1\}$, which is a contradiction. Hence $\gen{x}\normal G$. Since $G/C_G(x)$ is a $q$-group of exponent dividing $q$ and it is isomorphic to a subgroup of $\Aut(\gen{x})$, it follows that $G=C_G(x)$ or $G/C_G(x)\cong\Z_q$, from which part (2) follows.

Case 2: $\exp(P)=p$ and $\exp(Q)=q$. If $r\in\pi(G)\setminus\{p,q\}$, then by Theorem \ref{elementform}, $\exp(S_r(G))=r$. Assume $\exp(S_p(G))>p$ and $x,y$ be $p$-elements of the same order $p^m>p$. Since $\gen{Z(G),x}=\gen{P,x}\times Q$, by Theorem \ref{nilpotentC_4}(2), $\gen{P,x}$ is cyclic, hence $P\cong\Z_p$ and $P\subseteq\gen{x}$. Similarly, $P\subseteq\gen{y}$. If $\gen{x}\neq\gen{y}$, then by Lemma \ref{C4free}, $xz$ and $yz$ produce an induced $4$-cycle for all $z\in Q\setminus\{1\}$, which is a contradiction. Thus $G$ has a unique cyclic $p$-subgroup of order $p^m$. In particular $\gen{x}\normal G$. Since by Theorem \ref{elementform}, $G$ has no elements of order $p^2q^2$, it follows that $\exp(S_q(G))=q$. If $w\in G$ is an element of order $r\in\pi(G)\setminus\{p,q\}$, then by Lemma \ref{C4free}, the elements $xz$ and $x^{|x|/p}zw$ give rise to an induced $4$-cycle for all $z\in Q\setminus\{1\}$, which is a contradiction. Thus $\pi(G)=\{p,q\}$. Now, suppose that the order of $x$ is maximal with respect to being a $p$-element. Clearly, $C_{S_p(G)}(x)=\gen{x}$. If $p=2$, then since $S_p(G)\setminus\gen{x}$ contains only involutions acting on $\gen{x}$ by inversion, it follows that $[S_p(G):\gen{x}]\leq2$. If $p>2$, then since $S_p(G)/\gen{x}$ is isomorphic to a $p$-subgroup of $\Aut(\gen{x})$ of exponent at most $p$, it follows that $[S_p(G):\gen{x}]\leq p$. By Theorem \cite[5.3.4]{djsr:1996}, $S_p(G)$ is a cyclic group or a dihedral $2$-group, from which we obtain parts (3) and (4), respectively. 

Finally, suppose that $\exp(S_p(G))=p$, $\exp(S_q(G))=q$, and $x,y$ are central elements of orders $p$ and $q$, respectively. If $\pi(G)=\{p,q\}$, then we are done. Thus we may assume that $\pi(G)\supset\{p,q\}$. Clearly, $\exp(S_r(G))=r$ for all $r\in\pi(G)\setminus\{p,q\}$. If $r,s\in\pi(G)\setminus\{p,q\}$, $a$ is an element of order $r$, $b$ is an element of order $s$ and $\gen{a}\neq\gen{b}$, then by Lemma \ref{C4free}, $xya$ and $xyb$ produce an induced $4$-cycle, which is a contradiction. Hence, $\pi(G)=\{p,q,r\}$ for some $r$ and $S_r(G)=\gen{z}$ is a normal cyclic subgoup of $G$ of order $r$. By Theorem \ref{elementform}, $C_G(z)=C_G(xyz)=\gen{xyz}$. Since $G/C_G(z)$ is isomorphic to a subgroup of $\Aut(\gen{z})$, it follows that $G/C_G(z)$ is a cyclic gorup of order $p$, $q$ or $pq$, from which we obtain parts (5) and (6). The proof is complete.
\end{proof}
%--------------------------------------------------
\begin{theorem}
Let $G$ be a finite group with $C_4$-free power graph, which is not a prime power group. If $Z(G)$ is a $p$-group which is not an elementary abelian $p$-group, then $Z(G)$ is cyclic and $\exp(S_q(G))=q$ for every $q\neq p$. Also, 
\begin{itemize}
\item[(1)]$\pi(C_G(x))=\{p,q\}$, 
\item[(2)]$S_p(C_G(x))$ is a normal cyclic subgroup of $C_G(x)$,
\item[(3)]$C_G(x)=\gen{y}\times S_q(C_G(x))$, or $C_G(x)=(\gen{y}\times S_q(C_{C_G(x)}(y)))\rtimes\Z_q$ if $p$ is odd prime, where $\gen{y}=S_p(C_G(x))$,
\end{itemize} 
for every $q$-element $x\in G$ such that $q\neq p$.
\end{theorem}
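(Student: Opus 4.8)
The plan is to reduce the whole statement to Theorem~\ref{elementform}(2) by squeezing out the only consequence of the hypothesis that I really need: since $Z(G)$ is a $p$-group that is not elementary abelian, its exponent is at least $p^{2}$, so $G$ has a central element $c$ with $|c|=p^{2}$; and since $G$ is not a prime power group, some prime $q\neq p$ divides $|G|$, so $G$ has a nontrivial $q$-element (which will turn out to have order exactly $q$). The key trick: for a nontrivial $q$-element $x$, the element $x'=cx$ has order $p^{2}q$ and, $c$ being central, $C_G(x')=C_G(x)$. So I would apply Theorem~\ref{elementform}(2) — together with the line of its proof recording that $\pi(C_G(x'))=\{p,q\}$ once the $p$-exponent exceeds $1$ — to $x'$; this at once gives $\pi(C_G(x))=\{p,q\}$, which is (1) (the prime $p$ occurring because $Z(G)\leq C_G(x)$), and, writing $P=S_p(C_G(x))$, that $H_p(P)$ is a normal cyclic subgroup of $C_G(x)$ and $\exp(S_q(C_G(x)))=q$.

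First I would clear the preliminary claims. For ``$Z(G)$ cyclic'': if not, then $Z(G)$, having exponent $\geq p^{2}$, contains $\gen{a}\times\gen{b}\cong\Z_{p^{2}}\times\Z_{p}$, and for a nontrivial $q$-element $w$ the cyclic groups $\gen{aw}$ and $\gen{abw}$ are incomparable and meet in $\gen{a^{p}}\times\gen{w}$, of order $pq$, so Lemma~\ref{C4free} exhibits an induced $4$-cycle in $\p(G)$, a contradiction. For ``$\exp(S_q(G))=q$'': a $q$-element of order $q^{2}$ multiplied by $c$ would have order $p^{2}q^{2}$, forbidden by Theorem~\ref{elementform}; the same works for every prime $\neq p$. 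For (2): since $Z(G)\normal G$ is a $p$-group it lies in every Sylow $p$-subgroup of $C_G(x)$, so $\gen{z}:=Z(G)\leq P$ and $|P|\geq p^{2}$, whence $H_p(P)=P$ and $P$ is normal cyclic in $C_G(x)$ as soon as it is cyclic. To get cyclicity I would rerun the Lemma~\ref{C4free} argument from the proof of Theorem~\ref{elementform}(2), now using $x$ itself as the attached $q$-element: it shows any two elements of $P$ have comparable or trivially intersecting cyclic subgroups, so $P$ contains no copy of $\Z_{p^{2}}\times\Z_{p}$. But $\gen{z}\leq Z(C_G(x))$ is central in $P$ of order $\geq p^{2}$; so if $P$ were non-cyclic it would have more than one subgroup of order $p$ (for $p=2$ this uses that $P$ is not generalized quaternion, since $\gen{z}\leq Z(P)$ has order $\geq p^{2}>2$), hence a subgroup $\gen{v}$ of order $p$ with $v\notin\gen{z}$, giving $\gen{z,v}\cong\Z_{p^{n}}\times\Z_{p}\supseteq\Z_{p^{2}}\times\Z_{p}$ — a contradiction. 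Therefore $P$ is cyclic.

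For (3), put $C=C_G(x)$, $P=\gen{y}$ with $|y|=p^{a}$ ($a\geq2$), and $Q=S_q(C)$ (exponent $q$). Since $P$ is an abelian normal Sylow subgroup, $P\leq C_C(P)$, so $C/C_C(P)$ is a $q$-group; it is a homomorphic image of a Sylow $q$-subgroup of $C$, hence of exponent dividing $q$, and it embeds in $\Aut(\gen{y})=\Aut(\Z_{p^{a}})$. If $p=2$, this automorphism group has trivial $q$-part, so $C_C(P)=C$, i.e.\ $P\leq Z(C)$, and Schur--Zassenhaus gives $C=\gen{y}\times Q$. If $p$ is odd, $\Aut(\Z_{p^{a}})$ is cyclic, so $C/C_C(P)$ is trivial or $\cong\Z_q$: in the first case again $C=\gen{y}\times Q$; in the second, $C_C(P)=C_C(y)=C_{C_G(x)}(y)$ has index $q$ in $C$ with $P\leq Z(C_C(P))$, so Schur--Zassenhaus gives $C_C(P)=\gen{y}\times S_q(C_{C_G(x)}(y))$, while a $q$-element $t\in C\setminus C_C(P)$ (one exists because a Sylow $q$-subgroup of $C$ surjects onto $C/C_C(P)$) necessarily has order $q$, so $\gen{t}$ complements the normal subgroup $C_C(P)$ and $C=(\gen{y}\times S_q(C_{C_G(x)}(y)))\rtimes\Z_q$. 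This is (3), and it makes clear why the extra $\rtimes\Z_q$ occurs only for odd $p$.

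The automorphism-group bookkeeping and the Schur--Zassenhaus splittings in the last paragraph are routine. The step I expect to require the most care is the promotion of ``$H_p(S_p(C_G(x)))$ is cyclic'' to ``$S_p(C_G(x))$ is cyclic'': this is exactly where the full strength of the hypothesis that $Z(G)$ is not elementary abelian is used, since it is the central cyclic subgroup $Z(G)$ of order $\geq p^{2}$ that rules out a nontrivial partition of the Sylow $p$-subgroup.
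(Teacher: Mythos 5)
Your proof is correct, and it reaches the paper's conclusions by a somewhat different organization. The paper's own proof gets ``$Z(G)$ cyclic and $\exp(S_q(G))=q$'' by applying Theorem~\ref{nilpotentC_4} to the nilpotent subgroup $\gen{Z(G),x}$, establishes that $S_p(C_G(x))$ has a unique subgroup of order $p$ by a direct Lemma~\ref{C4free} computation with the pair $xz,\,xyz$ (then excludes the generalized quaternion case and repeats the same trick for normality), and never explicitly addresses part (1). You instead exploit the central element $c$ of order $p^2$ to pass to $x'=cx$ of order $p^2q$ with $C_G(x')=C_G(x)$, so that Theorem~\ref{elementform}(2) hands you $\pi(C_G(x))=\{p,q\}$, the exponent condition, and the normal cyclic subgroup $H_p(S_p(C_G(x)))$ all at once; what remains is to promote $H_p(P)$ to all of $P$, which you do by combining the partition property of the maximal cyclic subgroups of $P$ with the central cyclic subgroup $Z(G)\leq Z(P)$ of order $\geq p^2$ (this also disposes of the generalized quaternion case more cleanly than the paper, since $|Z(Q_{2^n})|=2$). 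The underlying mechanics --- Lemma~\ref{C4free} applied to products with central elements, the unique-subgroup-of-order-$p$ criterion, and the analysis of $\Aut(\Z_{p^a})$ for part (3) --- are the same in both arguments, but your packaging buys an explicit proof of part (1), which the paper omits, and your use of Schur--Zassenhaus makes the splittings in part (3) more transparent than the paper's bare assertion of the product decompositions. One stylistic caution: the clause ``whence $H_p(P)=P$'' should be read as conditional on the cyclicity of $P$ established afterwards, since for a general $p$-group of order $\geq p^2$ the Hughes subgroup can be proper; as long as the logical order is cyclicity first, then $H_p(P)=P$, then normality via Theorem~\ref{elementform}(2), there is no gap.
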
 
\begin{proof}
If $x\in G$ is a $q$-element ($q\neq p$), then by Theorem \ref{nilpotentC_4}, $\gen{Z(G), x}\cong \Z_{p^m}\times\Z_q$, where $|Z(G)|=p^m$ ($m>1$). Hence $Z(G)$ is cyclic and $\exp(S_q(G))=q$ for all $q\neq p$. Put $Z(G)=\gen{z}$.

If $y\in C_G(x)\setminus\gen{z}$ is an element of prime order $p$, then by Lemma \ref{C4free}, the elements $xz,xyz$ produce an induce $4$-cycle, which is a contradiction. Thus $S_p(C_G(x))$ has a unique subgroup of prime order $p$ and hence, by \cite[5.3.6]{djsr:1996}, $S_p(C_G(x))$ is cyclic or a generalized quaternion $2$-group. If $S_2(C_G(x))$ is a generalized quaternion $2$-group, we can choose an element $y\in C_G(x)$ with order $4$ such that neither $\gen{y}\leq Z(G)$ nor $Z(G)\leq\gen{y}$ and, by Lemma \ref{C4free}, the elements $xy,xz$ give rise to an induced $4$-cycle, which is a contradiction. Hence $S_p(C_G(x))$ is cyclic and by the same argument we can show that $S_p(C_G(x))$ is normal in $C_G(x)$.

Put $S_p(C_G(x))=\gen{y}$ and $C:=C_{C_G(x)}(y)$. Since $\gen{y}\normal C_G(x)$, $C_G(x)/C$ is isomorphic to a subgroup of $\Aut(\gen{y})$. On the other hand, $\Aut(\gen{y})\cong\Z_{p^k(p-1)}$ or $\Z_{2^k}\times\Z_2$, where $p^k=|y|$. If $p=2$ then $C_G(x)/C$ is the trivial group, otherwise since $C_G(x)/C$ is a $q$-group, $C_G(x)/C$ is a cyclic group of order dividing $p-1$. As $\exp(S_q(G))=q$, the group $C_G(x)/C$ is trivial or a cyclic group of order $q$. Hence $C_G(x)=\gen{y}\times S_q(C_G(x))$ or $C_G(x)=(\gen{y}\times S_q(C_{C_G(x)}(y)))\rtimes\Z_q$. The proof is complete.
\end{proof}
%--------------------------------------------------
\begin{theorem}
Let $G$ be a finite group with $C_4$-free power graph, which is not a prime power group. If $Z(G)$ is an elementary abelian $p$-group of order $>p$, then for every $q$-element $x$ ($q\neq p$), we have $\pi(C_G(x))=\{p,q\}$ and if $|x|>q$, then
\begin{itemize}
\item[(1)]$\exp(S_p(C_G(x)))=p$ and $S_q(C_G(x))$ is a normal cyclic subgroup of $C_G(x)$,
\item[(2)]$C_G(x)=S_p(C_G(x))\times S_q(C_G(x))$ or $(S_p(C_{C_G(x)}(y))\times S_q(C_G(x)))\rtimes\Z_p$, where $\gen{y}=S_q(C_G(x))$.
\end{itemize}
\end{theorem}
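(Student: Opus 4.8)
The plan is to adapt the strategy of the two preceding theorems, the new ingredient being that $Z(G)$ now contains two \emph{independent} elements $a_1,a_2$ of order $p$, which makes it easy to manufacture induced $4$-cycles. First I fix a $q$-element $x$ with $q\neq p$ and establish $\pi(C_G(x))=\{p,q\}$. Trivially $q\in\pi(C_G(x))$, and $p\in\pi(C_G(x))$ since $1\neq Z(G)\leq C_G(x)$ is a $p$-group. If some prime $r\notin\{p,q\}$ divided $|C_G(x)|$, I pick an $r$-element $w\in C_G(x)$ and an element $x'$ of order $q$ in $\gen{x}$. As $a_1,a_2,x',w$ pairwise commute and have pairwise coprime orders, $\gen{a_ix'w}=\gen{a_i}\times\gen{x'}\times\gen{w}$ is cyclic of order $pqr$; the two subgroups obtained for $i=1,2$ do not contain one another (their $p$-parts $\gen{a_1},\gen{a_2}$ differ) while their intersection is $\gen{x'}\times\gen{w}\cong\Z_{qr}$, which is not a prime power. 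By Lemma~\ref{C4free} this yields an induced $C_4$, a contradiction; this is the only place where the rank of $Z(G)$ is used.

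From now on suppose $|x|=q^s$ with $s\geq2$ and write $P:=S_p(C_G(x))$. If $P$ had an element of order $p^2$, then together with the centralised element $x$ it would give an element of order divisible by $p^2q^2$, contradicting Theorem~\ref{elementform}; hence $\exp(P)=p$. Next, using a single central element $a$ of order $p$, one establishes $(\ast)$: any two incomparable cyclic $q$-subgroups $\gen{u},\gen{v}$ of $C_G(x)$ meet trivially -- otherwise $\gen{au}\cap\gen{av}=\gen{a}\times(\gen{u}\cap\gen{v})$ has order divisible by $pq$ and $\gen{au},\gen{av}$ do not contain one another, so Lemma~\ref{C4free} would produce a $C_4$. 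Because $x\in Z(C_G(x))$ has order at least $q^2$, $(\ast)$ forbids a $q$-element $y\in C_G(x)$ with $\gen{y}\cap\gen{x}=1$: choosing powers $x',y'$ of $x,y$ of orders $q^2$ and $q$, the subgroups $\gen{x'}$ and $\gen{x'y'}$ are distinct cyclic $q$-subgroups of order $q^2$ sharing the nontrivial element $x'^{q}$, against $(\ast)$. Hence $\gen{x}$ is comparable with every cyclic $q$-subgroup of $C_G(x)$; letting $M$ be the maximal cyclic $q$-subgroup containing $\gen{x}$, every cyclic $q$-subgroup lies in $M$ (if it is above $\gen{x}$, the maximal cyclic $q$-subgroup over it contains $x\in M$, hence equals $M$ by $(\ast)$; otherwise it is inside $\gen{x}\le M$). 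Thus $M$ contains every $q$-element of $C_G(x)$, which forces $M=S_q(C_G(x))$ to be a normal cyclic subgroup of $C_G(x)$, proving (1).

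For (2) put $\gen{y}:=S_q(C_G(x))$ and $C:=C_{C_G(x)}(y)=C_{C_G(x)}(\gen{y})\normal C_G(x)$. Then $\gen{y}\le C$ is a central normal Hall $q$-subgroup of $C$, so by Schur--Zassenhaus $C=\gen{y}\times S_p(C)=S_q(C_G(x))\times S_p(C_{C_G(x)}(y))$. The quotient $C_G(x)/C$ embeds in $\Aut(\gen{y})$; being a quotient of $C_G(x)/\gen{y}\cong P$ (Schur--Zassenhaus applied in $C_G(x)$), it is a $p$-group of exponent dividing $p$. When $q$ is odd, $\Aut(\gen{y})$ is cyclic, so $|C_G(x)/C|\leq p$; when $q=2$, $\Aut(\gen{y})$ is a $2$-group while $p\neq 2$, so $C_G(x)/C=1$. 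If $C_G(x)=C$ we obtain the first alternative of (2). Otherwise $[C_G(x):C]=p$, and since $\exp(P)=p$ some element of $P$ lying outside $C$ has order $p$ and generates a complement to $C$, giving $C_G(x)=(S_p(C_{C_G(x)}(y))\times S_q(C_G(x)))\rtimes\Z_p$.

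The step I expect to be the main obstacle is the cyclicity of $S_q(C_G(x))$: the $C_4$-freeness hypothesis yields only the local statement $(\ast)$ -- equivalently, that the maximal cyclic $q$-subgroups of $C_G(x)$ partition $S_q(C_G(x))$ -- and one must exploit that $x$ is a \emph{central} $q$-element of order at least $q^2$ to exclude a genuine two-generator part; the underlying point is that a non-cyclic abelian $q$-group possessing an element of order $q^2$ admits no partition into cyclic subgroups.
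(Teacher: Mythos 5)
Your proof is correct, and its overall skeleton matches the paper's: first pin down $\pi(C_G(x))$ using two independent central involutions-of-order-$p$, then kill $\exp(S_p(C_G(x)))>p$ via Theorem \ref{elementform}, then prove $S_q(C_G(x))$ is cyclic and normal, and finally apply the $N/C$ argument with $\Aut(\gen{y})$ to get the (semi)direct decomposition. The one step where you genuinely diverge is the cyclicity of $S_q(C_G(x))$. The paper gets this by applying Theorem \ref{nilpotentC_4}(2) to $Z(G)\times S_q(C_G(x))$ — which under the hood rests on Zappa's theorem that a non-cyclic $p$-group partitioned by its maximal cyclic subgroups has a proper Hughes subgroup — concluding that $H_q(S_q(C_G(x)))$ is cyclic, and then showing $H_q(S_q(C_G(x)))=S_q(C_G(x))$ because an element outside the Hughes subgroup would force $|x|=q$. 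Your replacement is a self-contained poset argument: the observation $(\ast)$ that incomparable cyclic $q$-subgroups must meet trivially (your $(\ast)$ is exactly the "partition" statement the paper also extracts), plus a direct verification that a central element of order $q^2$ is incompatible with such a partition being nontrivial (the $\gen{x'}$ versus $\gen{x'y'}$ trick). This buys you independence from Zappa's theorem and from Theorem \ref{nilpotentC_4}, and it also gives normality of $S_q(C_G(x))$ for free (it is the set of all $q$-elements, hence characteristic), whereas the paper needs a separate conjugation-plus-Lemma-\ref{C4free} argument. Your treatment of part (2) is the same as the paper's in substance but usefully more explicit: the paper's "from which the result follows" is exactly your Schur--Zassenhaus splitting of $C=\gen{y}\times S_p(C)$ and the extraction of an order-$p$ complement from $P\setminus C$ using $\exp(P)=p$.
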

\begin{proof}
Let $x$ be a $q$-element ($q\neq p$). If $\pi(C_G(x))\neq\{p,q\}$, then there exists $y\in C_G(x)$ such that $|y|=r\neq p,q$, from which by utilizing Lemma \ref{C4free}, it follows that $xyz_1,xyz_2$ generate an induced $4$-cycle for all $z_1,z_2\in Z(G)\setminus\{1\}$ such that $\gen{z_1}\neq\gen{z_2}$, which is a contradiction. Thus $\pi(C_G(x))=\{p,q\}$.

Now, suppose that $|x|>q$. Then by Theorem \ref{nilpotentC_4} applied to the group $Z(G)\times S_q(C_G(x))$,  $H_q(S_q(C_G(x)))$ is cyclic. If $H_q(S_q(C_G(x)))\neq S_q(C_G(x))$ and $y\in S_q(C_G(x))\setminus H_q(S_q(C_G(x)))$, then $xy\in S_q(C_G(x))\setminus H_q(S_q(C_G(x)))$, which implies that $|x|=|xy|=q$, a contradiction. Thus $H_q(S_q(C_G(x)))=S_q(C_G(x))$, which implies that $S_q(C_G(x))=\gen{y}$ is a cyclic gorup. If $S_q(C_G(x))\not\normal C_G(x)$, then there exists $g\in C_G(x)$ such that $\gen{y}\neq\gen{y^g}$. Hence, by Lemma \ref{C4free}, the elements $yz$ and $y^gz$ produce an induced $4$-cycle for all $z\in Z(G)\setminus\{1\}$, which is a contradiction. Thus $S_q(C_G(x))\normal C_G(x)$. Clearly, by Theorem \ref{elementform}, $\exp(S_p(C_G(x)))=p$. Put $C:=C_{C_G(x)}(y)$. Since $C_G(x)/C$ is $p$-group of exponent $p$ isomorphic to a subgroup of $\Aut(\gen{y})\cong\Z_{q-1}$, we must have $C_G(x)=C$ or $C_G(x)/C\cong\Z_p$, from which the result follows.
\end{proof}
%==================================================

\end{document}